\DeclarePairedDelimiter\abs{\lvert}{\rvert}%
\DeclarePairedDelimiter\norm{\lVert}{\rVert}%
\let\oldabs\abs
\def\abs{\@ifstar{\oldabs}{\oldabs*}}
\let\oldnorm\norm
\def\norm{\@ifstar{\oldnorm}{\oldnorm*}}
\newtheorem{theorem}{Theorem}
\newtheorem{lemma}[theorem]{Lemma}
\newtheorem{proposition}[theorem]{Proposition}
\theoremstyle{definition}
\theoremstyle{remark}
\newtheorem*{remark}{Remark}
\newtheorem{example}[theorem]{{\bf Example}}
\numberwithin{theorem}{section}
\numberwithin{proposition}{section}
\numberwithin{lemma}{section}
\numberwithin{corollary}{section}
\numberwithin{equation}{section}
\numberwithin{conjecture}{section}
\numberwithin{example}{section}
\setlist[enumerate,1]{before=}
\newcommand{\N}{\mathbb{N}}
\newcommand{\Z}{\mathbb{Z}}
\newcommand{\R}{\mathbb{R}}
\newcommand{\C}{\mathbb{C}}
\newcommand{\Q}{\mathbb{Q}}
\newcommand{\SL}{{\text {\rm SL}}}
\newcommand{\sgn}{\operatorname{sgn}}
\newcommand{\re}{\textnormal{Re}}
\newcommand{\lp}{\!\left(}
\newcommand{\rp}{\right)}
\def\H{\mathbb{H}}
\renewcommand{\pmod}[1]{\  \,  \left( \mathrm{mod} \,  #1 \right)}
\DeclareMathOperator{\tr}{tr}
\newcommand{\e}{\mathfrak{e}}
\renewcommand{\pmod}[1]{\  \,  \left( \mathrm{mod} \,  #1 \right)}
\newcommand{\CT}{\mathrm{CT}}
\DeclareMathOperator{\reg}{reg}
\begin{document}

\title[Cycle integrals of meromorphic modular forms and coefficients of mock theta functions]{Cycle integrals of meromorphic modular forms and coefficients of harmonic Maass forms}

\author{C. Alfes--Neumann}
\address{Mathematical Institute, Paderborn University, Warburger Str. 100,
D-33098 Paderborn, Germany}
\email{alfes@math.uni-paderborn.de}
\author{K. Bringmann}
\address{Mathematical Institute, University of Cologne, Weyertal 86-90, 50931 Cologne, Germany}
\email{kbringma@math.uni-koeln.de}
\author{J. Males}
\email{jmales@math.uni-koeln.de}
\thanks{The research of the first author is supported by a scholarship of the Daimler and Benz Foundation and the Klaus Tschira Boost Fund. The research
of the second author is supported by the Alfried Krupp Prize for Young University Teachers of the
Krupp foundation and by the SFB-TRR 191 ``Symplectic Structures in Geometry, Algebra and Dynamics'', funded by the DFG. The fourth author is supported by SNF project 200021\_185014.}
\author{M. Schwagenscheidt}
\address{ETH, Mathematics Dept., CH-8092, Z\"urich, Switzerland}
\email{mschwagen@ethz.ch}

\begin{abstract}
	 In this paper, we investigate traces of cycle integrals of certain meromorphic modular forms. By relating them to regularised theta lifts we provide explicit formulae for them in terms of coefficients of harmonic Maass forms. 
\end{abstract}

\maketitle
\markboth{C. ALFES--NEUMANN, K. BRINGMANN, J. MALES, AND M. SCHWAGENSCHEIDT}{CYCLE INTEGRALS OF MEROMORPHIC MODULAR FORMS}

\section{Introduction and statement of results}

A classical result of Kohnen and Zagier \cite{kohnen1984modular} asserts that certain simple linear combinations of geodesic cycle integrals of the weight $2k$ cusp forms\footnote{Kohnen and Zagier used a slightly different normalisation to the present paper.}
\begin{align}\label{Equation: definition fkA}
f_{k,\mathcal{A}} (z) \coloneqq \frac{|d|^{\frac{k+1}{2}}}{\pi} \sum_{Q \in \mathcal{A}} Q(z,1)^{-k},
\end{align}
where $k \in\N_{\geq 2}$ is even and $\mathcal{A}$ denotes an equivalence class of indefinite integral binary quadratic forms of discriminant $d > 0$, are {rational}. If $\mathcal{A}$ is an equivalence class of positive definite quadratic forms, i.e. $d < 0$, then the functions $f_{k,\mathcal{A}}$ are meromorphic modular forms of weight $2k$ for $\Gamma\coloneqq \SL_2(\Z)$ which decay like cusp forms towards $i\infty$. Inspired by the results of Kohnen and Zagier, three of the authors  showed in \cite{alfes2018rationality} that certain linear combinations of traces of cycle integrals of the meromorphic modular forms $f_{k,\mathcal{A}}$
\begin{align*}
\tr_{f_{k,\mathcal{A}}} (D) \coloneqq \sum_{Q \in \mathcal{Q}_D / \Gamma} \int_{c_Q} f_{k,\mathcal{A}} (z) Q(z,1)^{k-1} dz
\end{align*}
are rational. Here $\mathcal{Q}_D$ denotes the set of integral binary quadratic forms of non--square discriminant $D > 0$, and $c_Q \coloneqq \Gamma_Q \backslash C_Q$ ($\Gamma_Q$ the stabiliser of $Q$ in $\Gamma$) is the image in $\Gamma \backslash \H$ of the geodesic $C_Q \coloneqq \left\{ z=x+iy \in \H : a|z|^2 + bx + c = 0 \right\}$ associated to $Q=[a,b,c] \in \mathcal{Q}_D$. Note that the cycle integrals have to be defined using the Cauchy principal value as explained in \cite{alfes2018rationality} if a pole of $f_{k,\mathcal{A}}$ lies on a geodesic $c_Q$ for $Q \in \mathcal{Q}_D$.

In the present paper, we relate the traces of cycle integrals of the meromorphic modular forms $f_{k,\mathcal{A}}$ to Fourier coefficients of so--called harmonic Maass forms. Below we state our main results in terms of vector--valued harmonic Maass forms for the Weil representation associated with an even lattice. In the introduction, we however restrict to the lattice of signature $(1,2)$
\[
L \coloneqq \left\{X = \begin{pmatrix}-b & - c \\ a & b \end{pmatrix} : a,b,c \in \Z\right\},
\] 
equipped with the quadratic form $q(X) \coloneqq \det(X)$. The significance of the lattice $L$ lies in the fact that its dual lattice $L'$ can be identified with the set of all integral binary quadratic forms, with $-4q(X)$ corresponding to the discriminant. We let $\C[L'/L]$ be the group ring of the discriminant form $L'/L$, and we denote by $\mathcal{D}$ the Grassmannian of positive definite lines in $L \otimes \R$, which can be identified with the complex upper half--plane $\H$ by sending $z \in \H$ to the positive line generated by $\left(\begin{smallmatrix}- x & x^2+y^2 \\ -1 & x \end{smallmatrix} \right)$.

Let $\mathcal{A}$ be a fixed class of positive definite integral binary quadratic forms of discriminant $d < 0$. We let $z_\mathcal{A} \in \H$ denote the CM point associated to some quadratic form $Q \in \mathcal{A}$, which means that $z_\mathcal{A}$ is the unique solution of $Q(z_\mathcal{A},1) = 0$ in $\H$. For simplicity, we denote the corresponding positive line in $\mathcal{D}$ by the same symbol $z_\mathcal{A}$, and we let $z_\mathcal{A}^{\perp}$ denote its orthogonal complement in $L \otimes \R$. Since $z_\mathcal{A}$ is a CM point, the corresponding positive line in $\mathcal{D}$ and its orthogonal complement are defined over $\Q$, and we may define two sublattices of $L$ by 
\begin{align*}
P \coloneqq L \cap z_\mathcal{A}, \qquad N \coloneqq L \cap z_\mathcal{A}^\perp,
\end{align*}
which are one--dimensional positive definite and two--dimensional negative definite sublattices, respectively. Then $P \oplus N$ has finite index in $L$. For simplicity, we assume that $L = P \oplus N$ in the introduction. 

The usual vector--valued theta function $\Theta_P$ associated to $P$ is a holomorphic modular form of weight $\frac{1}{2}$ for the Weil representation of $P$. We denote by $\mathcal{G}_P^+$ the holomorphic part of a harmonic Maass form $\mathcal{G}_P$ of weight $\frac{3}{2}$ for the dual Weil representation of $P$ that maps to $\Theta_P$ under $\xi_{\frac 32}$, where $\xi_\kappa \coloneqq 2iv^\kappa \overline{\frac{\partial}{\partial {\overline{\tau}}}}$ with $\tau = u+iv \in \H$.
Furthermore, for $k \in 2\N$ we let $f(\tau)=\sum_{n \gg -\infty}c_f(n)e(n\tau)$ (with $e(w) \coloneqq e^{2\pi i w}$ for $w \in \C$) be a weakly holomorphic modular form of weight $\frac{3}{2}-k$ satisfying the Kohnen plus space condition $c_f(n) = 0$ for $n \equiv 1,2 \pmod 4$. We also assume that $c_f(-D) = 0$ if $D > 0$ is a square. The modular form $f$ corresponds to a vector--valued weakly holomorphic modular form of weight $\frac{3}{2}-k$ for the Weil representation of $L$, which we also denote by $f$ (compare \cite[Section 5]{eichlerzagier}). The following formula is the main result of this paper; the general result for arbitrary congruence subgroups and both even and odd $k$ can be found in Theorem~\ref{Theorem: main result}.

\begin{theorem}\label{Theorem: main, intro}
	Let $k\in2\N$ and assume that $z_\mathcal{A}$ does not lie on any of the geodesics $c_Q$ for $Q \in \mathcal{Q}_D$ if $c_f(-D) \neq 0$. 
	Then we have that
	\begin{equation*}
	\sum_{D > 0}c_f(-D)\tr_{f_{k,\mathcal{A}}} (D) = \frac{2^{k-3}|d|^{\frac{1}{2}}}{\pi \left| \overline{\Gamma}_{z_{\mathcal{A}} }\right| }  \CT\left(\left\langle f(\tau) ,  \left[ \mathcal{G}^+_P(\tau) , \Theta_{N^{-}} (\tau)\right]_{\frac{k}{2}-1}  \right\rangle \right),
	\end{equation*}
	where $\overline{\Gamma}_{z_\mathcal{A}}$ is the stabiliser of $z_\mathcal{A}$ in $\overline{\Gamma} = \Gamma/\{\pm 1\}$, $\CT$ denotes the constant term in a Fourier expansion, $\langle \cdot,\cdot\rangle$ is the natural bilinear form on $\C[L'/L]$, $\left[\cdot, \cdot\right]_n$ denotes the $n$--th Rankin--Cohen bracket, and $\Theta_{N^-}$ is the holomorphic theta function associated to the positive definite lattice $N^- \coloneqq (N,-q)$.
\end{theorem}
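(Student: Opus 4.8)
The plan is to realise the left-hand side as a regularised theta lift and then evaluate that lift by unfolding against the CM point $z_{\mathcal{A}}$ and exploiting the splitting $L = P\oplus N$.

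\textbf{Step 1: From traces of cycle integrals to a theta integral.} The traces $\tr_{f_{k,\mathcal{A}}}(D)$ are, up to an explicit constant, the $D$-th Fourier coefficients of the regularised Shintani theta lift of $f_{k,\mathcal{A}}$, i.e.\ of $\int^{\reg}_{\Gamma\backslash\H} f_{k,\mathcal{A}}(z)\,\overline{\Theta_{\mathrm{Sh}}(\tau,z)}\, y^{2k}\,\tfrac{dx\,dy}{y^2}$, where $\Theta_{\mathrm{Sh}}$ is the vector-valued Shintani kernel for the Weil representation of $L$, carrying a homogeneous polynomial of degree $k-1$ in the Grassmannian variable $z$ (for odd $k$ one uses the Millson kernel instead, which is the only structural change needed for Theorem~\ref{Theorem: main result}). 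Pairing with $f$ and taking the constant term, the left-hand side becomes a constant multiple of $\CT(\langle f(\tau),\int^{\reg}_{\Gamma\backslash\H} f_{k,\mathcal{A}}(z)\,\overline{\Theta_{\mathrm{Sh}}(\tau,z)}\, y^{2k}\,\tfrac{dx\,dy}{y^2}\rangle)$. Interchanging the regularised $\tau$- and $z$-integrals (equivalently, invoking the adjointness of the theta lift) rewrites this as a regularised Petersson-type pairing of $f_{k,\mathcal{A}}(z)$ against the theta lift $\Xi(f)(z)$ of $f$ in the Grassmannian variable, a locally harmonic object of weight $2k$.

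\textbf{Step 2: Unfolding against the CM point.} Now substitute $f_{k,\mathcal{A}}(z) = \frac{|d|^{(k+1)/2}}{\pi}\sum_{Q\in\mathcal{A}}Q(z,1)^{-k}$. Since $\mathcal{A}$ is a single $\Gamma$-orbit of quadratic forms with finite stabiliser $\overline{\Gamma}_{z_{\mathcal{A}}}$ of the CM point, the sum unfolds the integral over $\Gamma\backslash\H$ into $\tfrac{1}{|\overline{\Gamma}_{z_{\mathcal{A}}}|}$ times an integral over $\H$ of a single term $Q_{\mathcal{A}}(z,1)^{-k}$. The hypothesis that $z_{\mathcal{A}}$ avoids every geodesic $c_Q$ with $c_f(-D)\neq 0$ guarantees that this unfolding and the ensuing regularised integral introduce no principal-value corrections, by a truncation argument as in \cite{alfes2018rationality}. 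The function $Q_{\mathcal{A}}(z,1)^{-k}$ is the weight-$2k$ reproducing kernel attached to $z_{\mathcal{A}}$, so integrating $\Theta_{\mathrm{Sh}}(\tau,z)$ against it over $\H$ evaluates the Grassmannian variable of the kernel at the positive line $z_{\mathcal{A}}$, leaving a regularised $\tau$-integral of $f(\tau)$ against $\Theta_{\mathrm{Sh}}(\tau,z_{\mathcal{A}})$.

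\textbf{Step 3: Splitting the lattice and identifying the Rankin--Cohen bracket.} Because $z_{\mathcal{A}}$ is a CM point, $L$ decomposes (in the introduction, $L = P\oplus N$) into the positive definite rank-one lattice $P$ and the negative definite rank-two lattice $N$. By the splitting of the theta kernel along $P\oplus N$ (a see-saw identity), $\Theta_{\mathrm{Sh}}(\tau,z_{\mathcal{A}})$ factors, and the homogeneous degree-$(k-1)$ polynomial of the Shintani kernel decomposes into bi-homogeneous pieces in the $P$- and $N$-variables; summing these pieces with the correct binomial coefficients and applying the relevant weight-raising operators reassembles precisely the $(k/2-1)$-st Rankin--Cohen bracket $[\Theta_P,\Theta_{N^-}]_{k/2-1}$. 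Finally, the remaining regularised Petersson-type pairing of the weakly holomorphic $f$ against this (cuspidal) theta datum collapses to the constant term of the product of holomorphic parts, by the standard lemma on such pairings; the $\Theta_P$-factor is thereby replaced by its $\xi_{\frac32}$-preimage $\mathcal{G}_P$ (whence $\mathcal{G}_P^+$ appears), while $\Theta_{N^-}$, being holomorphic, is unchanged. Collecting the constants accumulated in Steps 1--3 yields the factor $\frac{2^{k-3}|d|^{1/2}}{\pi|\overline{\Gamma}_{z_{\mathcal{A}}}|}$.

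\textbf{Main obstacle.} The technical heart of the argument is twofold: first, justifying the interchange of the two regularised integrals and the orbit-unfolding in the presence of the pole of $f_{k,\mathcal{A}}$ at $z_{\mathcal{A}}$, which is exactly what the hypothesis on the geodesics $c_Q$ is designed to allow; and second, the combinatorial bookkeeping that turns the restricted Shintani (or Millson) polynomial at the CM point into precisely the $(k/2-1)$-st Rankin--Cohen bracket with the stated normalisation, where essentially all of the explicit constant is generated.
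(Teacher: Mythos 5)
Your overall architecture (trace $\to$ theta integral $\to$ evaluation at the CM point via the splitting $L=P\oplus N$) resembles the paper's in outline, but the two middle steps rest on mechanisms that do not apply, and these are exactly where the content of the proof lies. In Step 2 you integrate the Shintani kernel against $Q(z,1)^{-k}$ over $\H$ and invoke a reproducing-kernel property to ``evaluate the Grassmannian variable at $z_\mathcal{A}$''. This does not work: $Q(z,1)^{-k}=a^{-k}(z-z_\mathcal{A})^{-k}(z-\overline{z_\mathcal{A}})^{-k}$ is not the weight-$2k$ Bergman kernel (that would be of the shape $(z-\overline{z_\mathcal{A}})^{-2k}$), and, more importantly, the Shintani/Siegel theta kernel is only real-analytic in $z$, so no holomorphic reproducing identity can evaluate it at a point; one would instead have to fight with the pole of $f_{k,\mathcal{A}}$ at $z_\mathcal{A}$ inside the integral. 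The paper never forms the Shintani lift of $f_{k,\mathcal{A}}$ at all. The localisation at $z_\mathcal{A}$ happens for a different reason: the pointwise identity $\tr_{f_{k,\mathcal{A}}}(D)\doteq D^{k-\frac12}R_{2-2k}^{k-1}(\mathcal{F}_{1-k,D})(z_\mathcal{A})$ from \cite{lobrich2019meromorphic} is taken as the starting point, the raised locally harmonic Maass form is then identified with the theta lift $\Lambda^{\reg}(f,\cdot)$ by unfolding the Maass--Poincar\'e series in the $\tau$-variable (Theorem~\ref{Theorem: raising of F_1-k,D in terms of theta lift}, via Proposition~\ref{Proposition: raising of vec valued G} and an inverse Laplace transform), and only then is the lift evaluated at $z_\mathcal{A}$ using \eqref{Equation: splitting}. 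Your proposed interchange of two regularised integrals with the meromorphic input $f_{k,\mathcal{A}}$ is likewise unjustified and is precisely the analytic difficulty this route is designed to avoid.

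In Step 3 the Rankin--Cohen bracket does not arise from decomposing the Shintani polynomial into bi-homogeneous pieces. In the paper it appears after moving the iterated raising operator off $f$ by self-adjointness, computing $R_{\frac12}^{\frac k2-1}\bigl(v^{-\frac12}\overline{\Theta_{P\oplus N}(\tau,z_\mathcal{A})}\bigr)=L_{\frac32}(\mathcal{G}_P)\otimes R_1^{\frac k2-1}(\Theta_{N^-})$, and recognising the right-hand side as $L_{k+\frac12}$ of a Rankin--Cohen bracket via Proposition~\ref{Proposition: lowering of RC brackets}, before applying Stokes' theorem as in \cite{bruinierfunke2004}. Without these identities, and without the hypergeometric evaluations behind Theorem~\ref{Theorem: raising of F_1-k,D in terms of theta lift}, there is no mechanism in your sketch that produces the constant $\frac{2^{k-3}|d|^{1/2}}{\pi|\overline{\Gamma}_{z_\mathcal{A}}|}$; you assert it is ``collected'' but never derive it. As written, the proposal is a plausible-sounding outline whose two load-bearing steps would fail or would have to be replaced by the paper's actual arguments.
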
 

We remark that by \cite[Theorem~1.1]{alfes2018rationality} the left--hand side of Theorem~\ref{Theorem: main, intro} is rational if the coefficients of $f$ are rational. By \cite[Theorem~4.3]{bruinierschwagenscheidt2017}, one can choose $\mathcal{G}_P$ such that the coefficients of its holomorphic part $\mathcal{G}_P^+$ lie in $\pi|d|^{-\frac{1}{2}}\Q$. In particular, combining Theorem~\ref{Theorem: main, intro} and \cite[Theorem~4.3]{bruinierschwagenscheidt2017} we obtain a new proof for the rationality of the linear combinations of cycle integrals of the meromorphic modular forms $f_{k,\mathcal{A}}$ in Theorem~\ref{Theorem: main, intro}. Moreover, by comparing \cite[Theorem~1.2]{alfes2018rationality} with Theorem~\ref{Theorem: main, intro} above one can obtain interesting identities between two finite sums involving coefficients of harmonic Maass forms.

\begin{example}\label{Example: Hurwitz class numbers}
	As an illustrating example of Theorem \ref{Theorem: main, intro}, we consider the class $\mathcal{A}$ of the quadratic form $[1,0,1]$ of discriminant $-4$, with the associated CM point $z_\mathcal{A} = i$. The corresponding positive line in $\mathcal{D}$ is spanned by the vector $\left(\begin{smallmatrix}0 & 1 \\ -1 & 0 \end{smallmatrix}\right)$. The lattice $P$ is also spanned by this vector, and is therefore isomorphic to $(\Z,n^2)$. The lattice $N$ consists of those $X \in L$ with $a = -c$, and hence is isomorphic to $(\Z^2,-n^2-m^2)$.
	
	Now we choose $k = 2$. In this case, for every discriminant $D > 0$ there exists a unique weakly holomorphic modular form $f_D$ of weight $-\frac{1}{2}$ satisfying the Kohnen plus space condition and having a Fourier expansion of the form $f_D(\tau) = e(-D\tau)+O(1)$. Using the Cohen--Eisenstein series of weight $\frac{5}{2}$, one can show that the constant term of $f_D$ is given by $-120L_D(-1)$, where $L_D(s)$ denotes the usual $L$--function associated to a non--square discriminant $D > 0$.
	The vector--valued theta function $\Theta_P$ can be identified with the Jacobi theta function $\theta(\tau) \coloneqq \sum_{n \in \Z}e(n^2 \tau)$ (by adding its components and replacing $\tau$ by $4\tau$, compare \cite[Section~5]{eichlerzagier}), and $\Theta_{N^-}$ can essentially be identified with $\theta^2$. Zagier \cite{zagier1975eisenstein} showed that the generating function $\mathcal{G}_P^+(\tau) \coloneqq -16\pi\sum_{n \geq 0}H(n)e(n\tau)$ of Hurwitz class numbers $H(n)$ (with $H(0) \coloneqq -\frac{1}{12}$) is the holomorphic part of a harmonic Maass form of weight $\frac{3}{2}$ which maps to $ \theta$ under $\xi_{\frac32}$.
	
	Now choosing $f = f_D$ and $\mathcal{G}_P^+$ as the generating function of Hurwitz class numbers as above, Theorem~\ref{Theorem: main, intro} yields the formula
	\[
	\tr_{f_{2,[1,0,1]}}(D) = -40L_{D}(-1)-4\sum_{\substack{n,m \in \Z \\ n \equiv D \pmod 2}}H\left(D-n^{2}-m^{2}\right)
	\]
	for any non--square discriminant $D > 0$ if $i$ does not lie on any of the geodesics $c_Q$ for $Q \in \mathcal{Q}_D$. Similarly, by computing the Rankin--Cohen bracket, for $k=4$ we obtain
	\begin{equation*}
	\tr_{f_{4,[1,0,1]}}(D) = \sum_{\substack{n,m \in \Z \\ n \equiv D \pmod 2}} \left(4D-10n^2-10m^2\right) H\left(D-n^{2}-m^{2}\right).
	\end{equation*}
\end{example}

The proof of Theorem~\ref{Theorem: main, intro} consists of three main steps. For the first one, we use the fact that $\tr_{f_{k,\mathcal{A}}}(D)$ can be written as a special value of the iterated raising operator applied to a {locally harmonic Maass form} $\mathcal{F}_{1-k,D}$, which was first introduced by Kane, Kohnen, and one of the authors \cite{bringmann2014locally} and whose precise definition in the vector--valued setup is recalled in Section~\ref{Section:thetalift}. 
Namely, \cite[Corollary~4.3]{lobrich2019meromorphic} 
implies that
\begin{align*}
\tr_{f_{k, \mathcal{A}}}(D) \doteq D^{k-\frac{1}{2}}R_{2-2k}^{k-1} (\mathcal{F}_{1-k, D}) (z_\mathcal{A}),
\end{align*}
where $R_{\kappa}^{n} \coloneqq R_{\kappa+2n-2}\circ \dots \circ R_{\kappa}$ with $R_{\kappa}^0 \coloneqq \mathrm{id}$ is an iterated version of the\textit{ Maass raising operator} $R_{\kappa} \coloneqq 2i\frac{\partial}{\partial \tau x}+ \frac{\kappa}{v}$, and the symbol $\doteq$ means equality up to a non--zero multiplicative constant.

In the second step, we write the function $R_{2-2k}^{k-1}(\mathcal{F}_{1-k,D})$ as a regularised theta lift, following Borcherds \cite{borcherds1998automorphic}. Namely, in Theorem~\ref{Theorem: raising of F_1-k,D in terms of theta lift} we show that
\begin{align*}
\sum_{D > 0}c_f(-D)D^{k-\frac{1}{2}}R_{2-2k}^{k-1}(\mathcal{F}_{1-k,D})(z) \doteq  \int_{\mathcal{F}}^{\reg} \left\langle R_{\frac{3}{2} -k}^{\frac{k}{2}-1}(f)(\tau), \overline{\Theta_L(\tau,z)} \right\rangle v^{-\frac{1}{2}} \frac{du dv}{v^2},
\end{align*}
where the integral is taken over the standard fundamental domain $\mathcal{F}$ of $\Gamma$ and has to be regularised as explained in Section~\ref{Section:thetalift}, and $\Theta_L(\tau,z)$ denotes the Siegel theta function associated to $L$.

Finally, in the third step, we use the fact that the evaluation of the Siegel theta function $\Theta_L(\tau,z_\mathcal{A})$ at the CM point $z_\mathcal{A}$ essentially splits as a tensor product of the holomorphic theta functions $\Theta_P$ and $\Theta_{N^-}$ associated to the lattices $P$ and $N^-$. Then using Stokes' Theorem, the regularised theta integral can be evaluated as
\begin{align*}
\int_{\mathcal{F}}^{\reg} \left\langle R_{\frac{3}{2} -k}^{\frac{k}{2}-1}(f)(\tau), \overline{\Theta_L(\tau,z_\mathcal{A})} \right\rangle v^{-\frac{1}{2}} \frac{du dv}{v^2} \doteq \CT\left( \left\langle f(\tau),  \left[\mathcal{G}_{P}^+(\tau), \Theta_{N^{-}}(\tau)\right]_{\frac{k}{2}-1} \right\rangle \right),
\end{align*}
see Theorem~\ref{Theorem: theta lift at CM points} below. Our strategy to prove the last formula closely follows methods from recent work of Bruinier, Ehlen, and Yang \cite{bruinier2020greens}. Combining these three steps gives Theorem~\ref{Theorem: main, intro}.

The paper is organised as follows. We begin in Section~\ref{Section: prelims} by recalling preliminaries which are pertinent to the rest of the paper. Section \ref{Section:thetalift} is dedicated to the study of the regularised theta lift alluded to above. The evaluation of the theta lift at CM points is discussed in Section~\ref{Section: theta integral at CM}. Finally, in Section~\ref{Section: main results} we give the proof of Theorem~\ref{Theorem: main, intro} and its generalisation to higher level and arbitrary weight.
\section*{Acknowledgments}
The authors thank Jan Bruinier and Stephan Ehlen for useful discussions.

\section{Preliminaries}\label{Section: prelims}

\subsection{The Weil representation}\label{Section: Weil representation}
The {\it metaplectic extension of $\SL_2(\Z)$} is defined as
\begin{equation*}
\widetilde{\Gamma} \coloneqq \text{Mp}_2(\Z) \coloneqq \left\{ (\gamma, \phi) \colon \gamma = \left(\begin{matrix}
a & b \\ c & d
\end{matrix}\right)\in \SL_2(\Z), \phi\colon \H \rightarrow \C \text{ holomorphic}, \phi^2(\tau) = c\tau+d  \right\}.
\end{equation*}
We let $\widetilde{\Gamma}_\infty$ denote the subgroup generated by
$\widetilde{T}\coloneqq \left( \left(\begin{smallmatrix}
1 & 1 \\ 0 & 1
\end{smallmatrix}\right), 1 \right).
$

Let $L$ be an even lattice of signature $(r,s)$ with quadratic form $q$ and associated bilinear form $(\cdot,\cdot)$. Let $L'$ denote its dual lattice, $\C[L'/L]$ be the group ring of $L'/L$ with standard basis elements $\e_{\mu}$ for $\mu \in L'/L$, and $\langle\cdot,\cdot\rangle$ be the natural bilinear form on $\C[L'/L]$ given by $\langle\e_{\mu},\e_{\nu}\rangle = \delta_{\mu,\nu}$. The {\it Weil representation} $\rho_L$ associated with $L$ is the representation of $\widetilde{\Gamma}$ on $\C[L'/L]$ defined by
\begin{align*}
\rho_L(T)(\e_\mu) \coloneqq e(q(\mu)) \e_\mu, \qquad
\rho_L(S)(\e_\mu) \coloneqq \frac{e\left(\frac18(s-r)\right)}{\sqrt{|L'/L|}} \sum_{\nu \in L'/L} e(-(\nu,\mu)) \e_{\nu}.
\end{align*}
The Weil representation $\rho_{L^-}$ associated to the lattice $L^- = (L,-q)$ is called the \textit{dual Weil representation} associated to $L$.

\subsection{Harmonic Maass forms}
Let $\kappa \in \frac{1}{2} \Z$ and define the {\it slash--operator} by
\[
f\mid_{\kappa,\rho_{L}}(\gamma,\phi) (\tau):= \phi(\tau)^{-2\kappa}\rho_{L}^{-1}(\gamma,\phi)f(\gamma\tau),
\]
for a function $f\colon \H \rightarrow \C[L'/L]$ and $(\gamma,\phi) \in \widetilde{\Gamma}$. Following \cite{bruinierfunke2004}, we call a smooth function $f \colon \H \rightarrow \C[L'/L]$ a {\it harmonic  Maass form} of weight $\kappa$ with respect to $\rho_L$ if it is annihilated by the \textit{weight $\kappa$} \textit{Laplace operator}
\[
\Delta_{\kappa} \coloneqq -v^2\left(\frac{\partial^2}{\partial u^2} + \frac{\partial^2}{\partial v^2} \right) + i\kappa v\left(\frac{\partial}{\partial u} + i\frac{\partial}{\partial v} \right),
\]
if it is invariant under the slash--operator $\mid_{\kappa,\rho_L}$, and if there exists a $\C[L'/L]$--valued Fourier polynomial (the \emph{principal part} of $f$)
	\begin{equation*}
	P_f(\tau) \coloneqq \sum_{\mu \in L'/L} \sum_{n \leq 0} c_f^+(\mu,n) e(n\tau) \e_\mu 
	\end{equation*}
	such that $f(\tau) - P_f(\tau) = O(e^{-\varepsilon v})$ as $v \rightarrow \infty$ for some $\varepsilon > 0$. We denote the vector space of harmonic Maass forms of weight $\kappa$ with respect to $\rho_L$ by $H_{\kappa,L}$, and we let $M_{\kappa,L}^!$ be the subspace of weakly holomorphic modular forms. Every $f \in H_{\kappa,L}$ can be written as a sum $f = f^+ + f^-$ of a \emph{holomorphic} and a \emph{non--holomorphic part}, having Fourier expansions of the form
\begin{align*}
f^+(\tau) &= \sum_{\mu \in L'/L} \sum_{n \gg -\infty} c_f^+(\mu,n) e(n\tau) \e_\mu, \quad
f^-(\tau) = \sum_{\mu \in L'/L} \sum_{n < 0} c_f^-(\mu,n)\Gamma(1-\kappa, 4\pi |n| v) e(n\tau) \e_\mu,
\end{align*}
where $\Gamma(s,x): = \int_x^\infty t^{s-1}e^{-t} dt$ denotes the incomplete Gamma function.

The antilinear differential operator $\xi_\kappa = 2iv^\kappa \overline{\frac{\partial}{\partial {\overline{\tau}}}}$ from the introduction maps a harmonic Maass form $f \in H_{\kappa,L}$ to a cusp form of weight  $2-\kappa$ for $\rho_{L^-}$. We further require the \emph{lowering} and \emph{raising operators} $L_\kappa \coloneqq -2iv^2\frac{\partial}{\partial \overline{\tau}}$ and $R_\kappa = 2i\frac{\partial}{\partial \tau}+ \frac{\kappa}{v}$, which lower and raise the weight of a smooth function transforming like a modular form of weight $\kappa$ for $\rho_L$ by two.

\subsection{Maass--Poincar\'{e} series}\label{Section: Maass--Poincare series}
Let $\kappa \in \frac{1}{2} \Z$ with $\kappa < 0$, and denote by $M_{\mu, \nu}$ the usual $M$--Whittaker function (see \cite[equation~13.1.32]{abramowitz1988handbook}). We define, for $s \in \C$ and $u \in \R \backslash \{0\}$,
\begin{align}\label{curlyM}
\mathcal{M}_{\kappa,s} (u) \coloneqq |u|^{-\frac{\kappa}{2}} M_{\sgn(u) \frac{\kappa}{2}, s-\frac{1}{2}} (|u|).
\end{align}
Following \cite{bruinier2004borcherds}, for $\mu \in L'/L$ and $m \in \Z - q(\mu)$ with $m > 0$ we define the vector--valued Maass--Poincar\'{e} series
\begin{align*}
F_{\mu, -m, \kappa,s}(\tau) \coloneqq \frac{1}{2 \Gamma(2s)} \sum_{(\gamma,\phi) \in \widetilde{\Gamma}_\infty \backslash \widetilde{\Gamma}} \left(\mathcal{M}_{\kappa,s} (-4 \pi m v) e(-mu) \e_\mu\right) \mid_{\kappa, \rho_L} (\gamma,\phi)(\tau).
\end{align*}
The series converges absolutely for $\re(s) > 1$, and at the special point $s = 1-\frac{\kappa}{2}$, the function
\[
F_{\mu, -m, \kappa}(\tau) \coloneqq F_{\mu,-m,\kappa,1-\frac{\kappa}{2}}(\tau)
\] 
defines a harmonic Maass form in $H_{\kappa,L}$ with principal part $e(m\tau)(\e_{\mu}+\e_{-\mu})+\mathfrak{c}$ for some constant $\mathfrak{c} \in \C[L'/L]$. In particular, every harmonic Maass form $f \in H_{\kappa,L}$ can be written as a linear combination
\begin{align}\label{Equation: basis}
f(\tau) = \frac{1}{2}\sum_{\mu \in L'/L}\sum_{m > 0}c_f^+(\mu,-m)F_{\mu,-m,\kappa}(\tau).
\end{align}

The following lemma follows inductively from \cite[Proposition~3.4]{bruinier2020greens}.
\begin{lemma}\label{Lemma: R_k^n acting on F_m,h}
	For $n \in \N_0$ we have that
	\begin{equation*}
	R_\kappa^n \left(F_{\mu, -m, \kappa,s}\right)(\tau) = (4\pi m)^n \frac{\Gamma\left(s+n+\frac{\kappa}{2}\right)}{\Gamma\left(s+\frac{\kappa}{2}\right)} F_{\mu, -m,\kappa+2n,s}(\tau).
	\end{equation*}
\end{lemma}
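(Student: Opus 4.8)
The plan is a direct induction on $n$. The base case $n = 0$ is immediate: $R_\kappa^0 = \mathrm{id}$ and $\Gamma(s+\tfrac{\kappa}{2})/\Gamma(s+\tfrac{\kappa}{2}) = 1$. The heart of the matter is the case $n = 1$, which is the content of \cite[Proposition~3.4]{bruinier2020greens}, namely
\[
R_\kappa\left(F_{\mu, -m, \kappa,s}\right)(\tau) = 4\pi m\left(s + \tfrac{\kappa}{2}\right) F_{\mu, -m, \kappa+2,s}(\tau).
\]
For completeness one recalls why this holds. First, the Maass raising operator $R_\kappa$ intertwines the slash operators of weights $\kappa$ and $\kappa+2$, in the sense that $R_\kappa\big(g\mid_{\kappa,\rho_L}(\gamma,\phi)\big) = (R_\kappa g)\mid_{\kappa+2,\rho_L}(\gamma,\phi)$ for every $(\gamma,\phi) \in \widetilde{\Gamma}$ and every smooth $g\colon \H \to \C[L'/L]$. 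Hence $R_\kappa$ may be carried inside the sum defining $F_{\mu,-m,\kappa,s}$ (legitimate for $\re(s)$ large, see below), and it remains to compute its effect on the seed $\mathcal{M}_{\kappa,s}(-4\pi m v)\,e(-mu)\,\e_\mu$. Writing $R_\kappa = 2i\frac{\partial}{\partial\tau} + \frac{\kappa}{v}$ and invoking the standard contiguous relations for the $M$--Whittaker function occurring in \eqref{curlyM} (see \cite{abramowitz1988handbook}), one checks that this seed is mapped to $4\pi m\left(s+\tfrac\kappa2\right)\,\mathcal{M}_{\kappa+2,s}(-4\pi m v)\,e(-mu)\,\e_\mu$, which is exactly the seed of $F_{\mu,-m,\kappa+2,s}$; summing over $\widetilde{\Gamma}_\infty\backslash\widetilde{\Gamma}$ gives the displayed $n=1$ identity.

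For the inductive step, assume the formula holds for some $n \ge 0$. Apply $R_{\kappa+2n}$ to both sides (noting $R_\kappa^{n+1} = R_{\kappa+2n}\circ R_\kappa^n$) and use the $n=1$ identity at weight $\kappa+2n$, namely $R_{\kappa+2n}\big(F_{\mu,-m,\kappa+2n,s}\big) = 4\pi m\left(s+\tfrac\kappa2+n\right)F_{\mu,-m,\kappa+2n+2,s}$. This yields
\[
R_\kappa^{n+1}\left(F_{\mu,-m,\kappa,s}\right) = (4\pi m)^{n}\,\frac{\Gamma\!\left(s+n+\tfrac\kappa2\right)}{\Gamma\!\left(s+\tfrac\kappa2\right)}\cdot 4\pi m\left(s+\tfrac\kappa2+n\right) F_{\mu,-m,\kappa+2(n+1),s}.
\]
Since $\Gamma\!\left(s+n+\tfrac\kappa2\right)\left(s+n+\tfrac\kappa2\right) = \Gamma\!\left(s+n+1+\tfrac\kappa2\right)$ by the functional equation of the Gamma function, the right--hand side equals $(4\pi m)^{n+1}\,\frac{\Gamma\left(s+n+1+\frac\kappa2\right)}{\Gamma\left(s+\frac\kappa2\right)}\,F_{\mu,-m,\kappa+2(n+1),s}$, which is the claim for $n+1$.

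The only point demanding care --- and the sole (minor) obstacle --- is the interchange of $R_\kappa$ with the infinite sum over $\widetilde{\Gamma}_\infty\backslash\widetilde{\Gamma}$. This is justified because the Poincar\'e series and the series obtained by differentiating it term by term both converge absolutely and locally uniformly on $\H$ for $\re(s)$ sufficiently large, so that $R_\kappa F_{\mu,-m,\kappa,s}$ is indeed obtained termwise there; the resulting identity between the two sides, both holomorphic in $s$, then persists by analytic continuation for all $s$ in the domain of definition, in particular at the special point $s = 1 - \frac{\kappa}{2}$ relevant later. Everything else is the bookkeeping of Whittaker recurrences and the Gamma functional equation carried out above.
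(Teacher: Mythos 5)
Your proof is correct and follows the same route as the paper, which simply notes that the lemma "follows inductively from \cite[Proposition~3.4]{bruinier2020greens}": you take that proposition as the $n=1$ case and iterate, using $R_\kappa^{n+1}=R_{\kappa+2n}\circ R_\kappa^n$ together with the Gamma functional equation $\left(s+n+\tfrac{\kappa}{2}\right)\Gamma\!\left(s+n+\tfrac{\kappa}{2}\right)=\Gamma\!\left(s+n+1+\tfrac{\kappa}{2}\right)$. The additional remarks on termwise differentiation and analytic continuation are sound but not needed beyond what the cited proposition already provides.
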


\subsection{Operators on vector--valued modular forms}\label{Section: representations and lattices} For an even lattice $L$ we let $A_{\kappa,L}$ be the space of $\C[L'/L]$--valued smooth modular forms (i.e. modular forms which possess derivatives of all orders)
 of weight $\kappa$ with respect to the representation $\rho_L$. 

Let $K \subset L$ be a sublattice of finite index. Since we have the inclusions $K \subset L \subset L' \subset K'$ we therefore have $L/K \subset L'/K \subset K'/K$, hence the natural map $L'/K \rightarrow L'/L, \mu \mapsto \bar{\mu}$. For $\mu \in K'/K$ and $f \in A_{\kappa, L}$,  and $g \in A_{\kappa,K}$, define
\begin{equation*}
(f_K)_\mu := \begin{dcases}
f_{\bar{\mu}}& \text{ if } \mu \in L'/K, \\
0& \text{ if } \mu \not\in L'/K,
\end{dcases}\qquad 
\left(g^L\right)_{\bar{\mu}} = \sum_{\alpha \in L/K} g_{\alpha + \mu},
\end{equation*}
where $\mu$ is a fixed preimage of $\bar{\mu}$ in $L'/K$. For the proof of the following lemma we refer the reader to \cite[Section~3]{bruinierFaltings}.

\begin{lemma}\label{Lemma: restriction and trace maps}
	There are two natural maps
	\begin{equation*}
	\textup{res}_{L/K} \colon A_{\kappa, L} \rightarrow A_{\kappa, K}, \hspace{10pt} f \mapsto f_K,
\qquad\qquad
	\tr_{L/K} \colon A_{\kappa, K} \rightarrow A_{\kappa, L}, \hspace{10pt} g \mapsto g^L,
	\end{equation*}
	such that for any $f \in A_{\kappa,L}$ and $g \in A_{\kappa,K}$, we have $
	\langle f, \overline{g}^L \rangle = \langle f_K, \overline{g} \rangle$.
	\end{lemma}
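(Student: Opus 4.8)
The statement to prove is Lemma~\ref{Lemma: restriction and trace maps}: that $\textup{res}_{L/K}$ and $\tr_{L/K}$ are well-defined maps between the stated spaces of vector-valued smooth modular forms, and that they are adjoint with respect to the bilinear pairings, i.e. $\langle f, \overline{g}^L \rangle = \langle f_K, \overline{g}\rangle$.

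\medskip

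\textbf{Plan of proof.} The plan is to verify each of the three assertions in turn: (i) that $f \mapsto f_K$ carries $A_{\kappa,L}$ into $A_{\kappa,K}$; (ii) that $g \mapsto g^L$ carries $A_{\kappa,K}$ into $A_{\kappa,L}$; and (iii) the adjointness identity, which is purely formal once (i) and (ii) are set up.

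\medskip

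First I would recall the explicit generator-level formulas for the Weil representations $\rho_L$ and $\rho_K$ on the generators $T$ and $S$ of $\widetilde{\Gamma}$, and observe the compatibility: on the subset $L'/K \subseteq K'/K$ the $T$-action of $\rho_K$ agrees with that of $\rho_L$ (both multiply $\e_\mu$ by $e(q(\mu))$, and $q$ is well-defined mod $K$ on the relevant cosets up to the usual integrality), while the $S$-action involves a Gauss-type sum over $K'/K$ that, after grouping cosets according to the projection $K'/K \twoheadrightarrow K'/L$ (note $L/K$ is the kernel), collapses to the $S$-action of $\rho_L$ up to the normalising constants $\sqrt{|K'/K|}$ versus $\sqrt{|L'/L|}$; the index relation $|K'/K| = [L:K]^2 |L'/L|$ makes these consistent. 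Smoothness, the weight-$\kappa$ transformation under $\phi(\tau)^{-2\kappa}$, and the growth/meromorphy conditions at the cusp are all inherited componentwise, so (i) and (ii) follow once the representation-theoretic compatibility is in hand. The key input here is really the duality $(f_K, g) \leftrightarrow (f, g^L)$ at the level of the group rings: for $\mu \in K'/K$ one has $\langle f_K, \overline{g}\rangle = \sum_{\mu \in L'/K} f_{\bar\mu}\,\overline{g_\mu}$, and reindexing the sum over $L'/K$ as a double sum over $\bar\mu \in L'/L$ and $\alpha \in L/K$ (writing each element of $L'/K$ uniquely as $\alpha + \mu_0$ for a fixed preimage $\mu_0$) gives $\sum_{\bar\mu \in L'/L} f_{\bar\mu} \sum_{\alpha \in L/K}\overline{g_{\alpha+\mu_0}} = \sum_{\bar\mu} f_{\bar\mu}\,\overline{(g^L)_{\bar\mu}} = \langle f, \overline{g}^L\rangle$, which is (iii).

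\medskip

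\textbf{Main obstacle.} The routine but slightly delicate point is checking that $f_K$ and $g^L$ genuinely transform under the \emph{full} metaplectic group $\widetilde{\Gamma}$ (not just $T$ and $S$ separately) with the correct Weil representation — one must confirm that the restriction/trace construction on coset sums is compatible with the cocycle/relations in $\widetilde\Gamma$, equivalently that the intertwining $\rho_L(\gamma) \circ \operatorname{res} = \operatorname{res}\circ\rho_K(\gamma)$ holds for all $\gamma$, which is where the computation in \cite[Section~3]{bruinierFaltings} does the real work; I would cite that reference for this verification rather than reproduce the Gauss-sum manipulation. Everything else — the adjointness identity (iii) and the inheritance of smoothness and growth conditions — is then a short formal computation of the type sketched above.
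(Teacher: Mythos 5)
Your proposal is correct and follows essentially the same route as the paper, which gives no proof of its own and simply refers to \cite[Section~3]{bruinierFaltings} for the verification that $\mathrm{res}_{L/K}$ and $\tr_{L/K}$ intertwine the Weil representations $\rho_L$ and $\rho_K$. Your reindexing of the sum over $L'/K$ as a double sum over $L'/L$ and $L/K$ correctly establishes the adjointness identity, which is the only part requiring an explicit computation.
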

	
	\subsection{Rankin--Cohen brackets}
Let $K$ and $L$ be even lattices. For $n\in\N_0$ and functions $f \in A_{\kappa,K}$ and $g \in A_{\ell,L}$ with $\kappa,\ell \in \frac{1}{2} \Z$ we define the $n$--th \emph{Rankin--Cohen bracket}
\begin{align*}\label{Definition: RC brackets}
[f,g]_n \coloneqq \frac{1}{(2\pi i)^n}\sum_{\substack{r,s \geq 0\\ r+s=n}} (-1)^r \frac{\Gamma(\kappa+n) \Gamma(\ell+n)}{\Gamma(s+1) \Gamma(\kappa+n-s) \Gamma(r+1) \Gamma(\ell+n-r)}  f^{(r)} \otimes g^{(s)},
\end{align*}
where the tensor product of two vector--valued functions $f = \sum_{\mu}f_\mu \e_\mu \in A_{\kappa,K}$ and $g = \sum_{\nu}g_\nu \e_\nu \in A_{\ell,L}$ is defined by 
\[
f \otimes g \coloneqq \sum_{\mu , \nu} f_\mu g_\nu \e_{\mu + \nu} \in A_{\kappa+\ell,K \oplus L}.
\]
The proof of the following formula can be found in \cite[Proposition~3.6]{bruinier2020greens}.

\begin{proposition}\label{Proposition: lowering of RC brackets}
	Let $f \in H_{\kappa,K}$ and $g \in H_{\ell,L}$ be harmonic Maass forms. For $n \in \N_0$ we have
	\begin{align*}
	(-4\pi)^n L_{\kappa+\ell+2n}\left([f,g]_n \right)= \frac{\Gamma(\kappa+n) }{\Gamma(n+1) \Gamma(\kappa)} L_\kappa (f) \otimes R_\ell^n (g) + (-1)^n \frac{ \Gamma(\ell+n)}{  \Gamma(n+1) \Gamma(\ell)} R_\kappa^n (f) \otimes L_\ell (g).
	\end{align*}
\end{proposition}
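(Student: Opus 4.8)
The plan is to reduce the identity to a finite combinatorial computation with the raising and lowering operators, using the Leibniz-type rules they satisfy when applied to tensor products of vector-valued modular forms. First I would record that on a smooth modular form $h$ of weight $\lambda$ one has $L_\lambda h = -2iv^2 \partial_{\overline\tau} h$ and $R_\lambda h = 2i\partial_\tau h + \tfrac{\lambda}{v} h$, and that these intertwine with the weight in the usual way: $R_\lambda$ raises $\lambda$ by $2$, $L_\lambda$ lowers it by $2$. Since $[f,g]_n$ is, up to the factor $(2\pi i)^{-n}$ and the Gamma-quotients, a linear combination $\sum_{r+s=n} c_{r,s}\, f^{(r)}\otimes g^{(s)}$ where $f^{(r)}$ means the $r$-th $\tau$-derivative of $f$ (componentwise), I would apply $L_{\kappa+\ell+2n}$ directly to this sum. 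The operator $\partial_{\overline\tau}$ annihilates the holomorphic derivatives $f^{(r)}$, $g^{(s)}$ only partially: one must be careful that $f,g$ are harmonic, not holomorphic, so $f^{(r)}$ is really $\partial_\tau^r$ applied to a non-holomorphic function. The cleanest route is to express everything through the non-holomorphic derivative $\partial_\tau$ and the operator $\partial_{\overline\tau}$, use that $f$ is harmonic so that $\partial_{\overline\tau} f = \overline{(2i)^{-1} v^{-\kappa}\, \xi_\kappa f}$ is related to $L_\kappa f$, and similarly for $g$.

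The key computational step is the following: for harmonic $f$ of weight $\kappa$, the mixed derivative satisfies a commutation relation of the shape $\partial_{\overline\tau}\bigl(\partial_\tau^r f\bigr) = $ (something involving $\partial_\tau^{r}$ of $L_\kappa f$ up to lower-order $v$-power corrections). Concretely I would prove by induction on $r$ that $\partial_{\overline\tau} \partial_\tau^r f = \tfrac{-1}{2i v^2}\,R_\kappa^r(L_\kappa f)$ up to an explicit scalar, or more precisely establish the identity relating $L_{\kappa+2r}$ applied to $\partial_\tau^r f$ (after correcting $\partial_\tau^r f$ to an honest weight-$(\kappa+2r)$ object via powers of $v$) to $R_\kappa^r(L_\kappa f)$. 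This is the standard fact that raising and lowering ``almost commute'': $[L_{\lambda+2}, R_\lambda] = $ a scalar times identity (indeed $R_{\lambda-2}L_\lambda - L_{\lambda+2}R_\lambda$ acts as a scalar, essentially $-\lambda$), iterated $r$ times. Once this is in hand, applying $L_{\kappa+\ell+2n}$ to $f^{(r)}\otimes g^{(s)}$ produces two terms by the Leibniz rule — one where the $\overline\tau$-derivative hits the $f$-factor, yielding (after the corrections above) a term proportional to $R_\kappa^r(L_\kappa f)\otimes R_\ell^s(g)$ with $r+s=n$, and one where it hits the $g$-factor, yielding $R_\kappa^r(f)\otimes R_\ell^s(L_\ell g)$.

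The final step is bookkeeping: substitute these back into $\sum_{r+s=n} c_{r,s}(\cdots)$, and observe that in the first family of terms, for the combination to collapse to $L_\kappa(f)\otimes R_\ell^n(g)$ one needs the telescoping identity
\[
\sum_{r+s=n} c_{r,s}\,(\text{scalar}_r)\,R_\kappa^r(L_\kappa f)\otimes R_\ell^s(g) \;=\; (\text{const})\, L_\kappa(f)\otimes R_\ell^n(g),
\]
which is forced because, after raising, $R_\kappa^r(L_\kappa f)$ for $r\geq 1$ must cancel in pairs against neighbouring terms — this is precisely where the specific shape of the Rankin--Cohen coefficients $c_{r,s}$ (with their alternating signs and factorials) is essential, and it is the same mechanism that makes the classical Rankin--Cohen bracket of two holomorphic forms again modular. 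I expect this cancellation — verifying that the weighted sum of the $R_\kappa^r(L_\kappa f)$-terms for $r=0,\dots,n$ telescopes down to the single $r=0$ contribution, with the correct constant $\frac{\Gamma(\kappa+n)}{\Gamma(n+1)\Gamma(\kappa)}$ — to be the main obstacle; it amounts to a hypergeometric-type identity among binomial coefficients that can be checked either by a direct generating-function argument or by induction on $n$. The symmetric cancellation handles the $g$-side, producing the $(-1)^n\frac{\Gamma(\ell+n)}{\Gamma(n+1)\Gamma(\ell)}$ term, and the overall power of $-4\pi$ on the left is collected from the $(2\pi i)^{-n}$ in the definition of $[f,g]_n$ together with the factors of $2i$ introduced by each application of $L$ and $R$.
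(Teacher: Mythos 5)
Your overall strategy---apply $L_{\kappa+\ell+2n}=-2iv^{2}\partial_{\overline\tau}$ directly to the defining sum of $[f,g]_n$, use the Leibniz rule on each $f^{(r)}\otimes g^{(s)}$, and exploit harmonicity to control the mixed derivatives---is the right one; the paper itself offers no proof but defers to \cite{bruinier2020greens}, where essentially this computation is carried out. However, your stated key lemma is false, and the mechanism you describe for the final collapse is not what happens. Since $\xi_\kappa f=2iv^{\kappa}\overline{\partial_{\overline\tau}f}$ is holomorphic, one has $\partial_{\overline\tau}f=-\tfrac{1}{2i}v^{-\kappa}\overline{\xi_\kappa f}$ with $\overline{\xi_\kappa f}$ antiholomorphic, so repeated $\partial_\tau$ only differentiates the power of $v$ and gives
\[
\partial_\tau^{\,r}\partial_{\overline\tau}f=(-1)^{r}\frac{\Gamma(\kappa+r)}{\Gamma(\kappa)}(2i)^{-r}v^{-r}\,\partial_{\overline\tau}f
=(-1)^{r+1}\frac{\Gamma(\kappa+r)}{2i\,\Gamma(\kappa)}(2i)^{-r}v^{-r-2}\,L_\kappa(f),
\]
i.e.\ a multiple of $v^{-r-2}L_\kappa(f)$, \emph{not} of an iterated raise of $L_\kappa(f)$. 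Indeed $R_{\kappa-2}\!\left(L_\kappa f\right)=0$ for every harmonic Maass form (equivalently $\Delta_\kappa=-R_{\kappa-2}L_\kappa$), so your identity ``$\partial_{\overline\tau}\partial_\tau^{\,r}f=\tfrac{-1}{2iv^{2}}R^{r}(L_\kappa f)$ up to an explicit scalar'' cannot be repaired for $r\geq1$: its right--hand side vanishes identically while the left--hand side does not.

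As a consequence there is no telescoping and nothing ``cancels in pairs''; each $r$ contributes a genuinely nonzero term $(\mathrm{const}_r)\,v^{-r-2}L_\kappa(f)\otimes g^{(s)}$, and the sum over $r+s=n$ reassembles \emph{term by term} into $L_\kappa(f)\otimes R_\ell^{n}(g)$ via the expansion $R_\ell^{n}=\sum_{s=0}^{n}\binom{n}{s}\frac{\Gamma(\ell+n)}{\Gamma(\ell+s)}v^{s-n}(2i)^{s}\partial_\tau^{\,s}$. The point is that the factor $\Gamma(\kappa+r)/\Gamma(\kappa)$ produced by differentiating $v^{-\kappa}$ exactly cancels the $\Gamma(\kappa+n-s)=\Gamma(\kappa+r)$ in the denominator of the Rankin--Cohen coefficient, leaving for every $s$ the \emph{same} constant $-\tfrac{1}{2i}(2i)^{-n}\tfrac{\Gamma(\kappa+n)}{\Gamma(\kappa)\Gamma(n+1)}$ times the $s$--th term of $R_\ell^{n}(g)$; multiplying by $-2iv^{2}$ and by $(2\pi i)^{-n}(-4\pi)^{n}=(2i)^{n}$ yields the first summand of the proposition, and the $g$--side is symmetric up to the sign $(-1)^{n}$ coming from $(-1)^{r}$ versus $(-1)^{s}$. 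If you replace your key lemma by the displayed formula and drop the telescoping picture, the rest of your outline goes through; as written, the proof has a genuine gap at its central step.
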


\subsection{A quadratic space of signature $(1,2)$}\label{Section: quadratic space}
For $M \in \N$ we consider the rational quadratic space
\begin{equation*}
V \coloneqq \left\{ X = \left(\begin{matrix}
-\frac{b}{2M} & -\frac{c}{M} \\ a & \frac{b}{2M}
\end{matrix}\right) \colon a,b,c \in \Q \right\}
\end{equation*}
along with the quadratic form $q(X) \coloneqq M \det (X)$ and the corresponding bilinear form  $(X,Y) \coloneqq -M\tr(XY)$ for $X,Y \in V$; it has signature $(1,2)$. Furthermore, its elements can be identified with rational quadratic forms $Q_X = [aM,b,c]$, where the discriminant of $Q_X$ corresponds to $-4Mq(X)$. The group $\SL_2(\Q)$ acts as isometries on $V$ via $gX \coloneqq gXg^{-1}$. Let $\mathcal{D}$ be the Grassmannian of lines in $V \otimes \R$ on which $q$ is positive definite. We may identify $\mathcal{D}$ with the upper half--plane $\H$ by associating to $z \in \H$ the positive line generated by
\begin{equation*}
X_1(z) \coloneqq \frac{1}{\sqrt{2M}y} \left( \begin{matrix}
-x & x^2+y^2 \\ -1 & x
\end{matrix} \right). 
\end{equation*}
Then $\SL_2(\R)$ acts on $\H$ by fractional linear transformations, and the identification is $\SL_2(\R)$--invariant, i.e. $gX_1(z) = X_1(gz)$.
Furthermore, define

\begin{equation*}
X_2(z) \coloneqq \frac{1}{\sqrt{2M}y} \left( \begin{matrix}
x & -x^2+y^2 \\ 1 & -x
\end{matrix} \right),\qquad
X_3(z) \coloneqq \frac{1}{\sqrt{2M}y} \left( \begin{matrix}
y & -2xy \\ 0 & -y
\end{matrix} \right).
\end{equation*}
Along with $X_1(z)$, these form an orthogonal basis of $V\otimes\R$. 
For $X \in V$ and $z \in \H$ we define the quantities
\begin{align*}
p_X(z) &\coloneqq -\sqrt{2M}(X,X_1(z)) = 
\frac{1}{y}\left(aM|z|^2+bx+c\right), \\
Q_X(z) &\coloneqq \sqrt{2M}y(X,X_2(z)+iX_3(z)) = 
aMz^2+bz+c. 
\end{align*}
We let $X_{z}$ and $X_{z^{\perp}}$ denote the orthogonal projections of $X$ to the line $\R X_1(z)$ and its orthogonal complement, respectively. We have the useful formulas
\begin{align}\label{explicit projections}
q(X_z) = \frac{1}{4M}p_X(z)^2, \qquad q(X_{z^\perp}) = -\frac{1}{4My^2}|Q_X(z)|^2.
\end{align}

\subsection{Theta functions}\label{Prelims: theta functions}
For a positive definite lattice $(K,q)$ of rank $n$ we define the vector--valued theta function
\begin{equation*}
\Theta_K(\tau) \coloneqq \sum_{\mu \in K'/K} \sum_{X \in K +\mu} e(q(X) \tau) \e_\mu.
\end{equation*}
The function $\Theta_K$ is a holomorphic modular form of weight $\frac{n}{2}$ for the Weil representation $\rho_K$.

For the rest of this section we let $L$ be an even lattice in the rational quadratic space $V$ of signature $(1,2)$ defined in Section~\ref{Section: quadratic space}. For $\tau,z \in \H$ we define the \emph{Siegel theta function}
\begin{align}\label{Definition: Siegel theta}
\Theta_L(\tau,z) \coloneqq v \sum_{\mu \in L'/L} \sum_{X \in L+\mu} e(q(X_z) \tau + q(X_{z^\perp}) \overline{\tau}) \e_\mu.
\end{align}
By \cite[Theorem~4.1]{borcherds1998automorphic}, the Siegel theta function $ \Theta_L$ transforms like a modular form of weight $-\frac{1}{2}$ for the Weil representation $\rho_{L}$ in $\tau$. Similarly, we define the \emph{Millson theta function}
\begin{align*}
\Theta_L^*(\tau,z) \coloneqq v \sum_{\mu \in L'/L} \sum_{X \in L+\mu} p_X(z)e(q(X_{z}) \tau + q(X_{z^\perp}) \overline{\tau}) \e_\mu.
\end{align*}
Again using \cite[Theorem~4.1]{borcherds1998automorphic}, we see that the Millson theta function $\Theta_L^*$ transforms like a modular form of weight $\frac{1}{2}$ for $\rho_L$ in $\tau$. Note that both theta functions can be rewritten using \eqref{explicit projections}. Both theta functions are invariant in $z$ under the subgroup $\Gamma_L$ of the orthogonal group $\mathrm{O}(L)$ which fixes the classes of $L'/L$.

If $K \subset L$ is a sublattice of finite index, then Lemma~\ref{Lemma: restriction and trace maps} implies that
\begin{align}\label{Equation: trace map for Theta_L}
\Theta_L = (\Theta_K)^L, \qquad \Theta_L^* = (\Theta_K^*)^L.
\end{align}
Now fix some $X_0 \in L'$ with $q(X_0) > 0$, let $\mathcal{A} = \Gamma_LX_0$ be its $\Gamma_L$--class and let $z_\mathcal{A} = \R X_0 \in \mathcal{D}$ be the positive line spanned by $X_0$. Recall that we can also view $z_\mathcal{A}$ as a point in $\H$, which we call a \emph{CM point} by a slight abuse of notation. Furthermore, let $P = L \cap z_\mathcal{A}$ and $N = L \cap z_\mathcal{A}^\perp$ be the corresponding positive definite one--dimensional and negative definite two--dimensional sublattices of $L$. A direct computation shows that the evaluation of the Siegel and the Millson theta functions at $z_\mathcal{A}$ split as
\begin{align}\label{Equation: splitting}
\Theta_{P \oplus N}(\tau,z_\mathcal{A}) = \Theta_P(\tau) \otimes v \overline{\Theta_{N^-}(\tau)}, \qquad \Theta_{P\oplus N}^*(\tau,z_\mathcal{A}) = \Theta_P^*(\tau) \otimes v \overline{\Theta_{N^-}(\tau)},
\end{align}
where
\[
\Theta_P^*(\tau) \coloneqq \sum_{\mu \in P'/P}\sum_{X \in P + \mu}p_X(z_\mathcal{A})e(q(X)\tau)\e_\mu
\]
is a holomorphic unary theta series of weight $\frac{3}{2}$ for $\rho_P$.

\section{Locally harmonic Maass forms and theta lifts}\label{Section:thetalift}

In this section we compute the action of the iterated raising operator on a certain locally harmonic Maass form and show that the resulting function can be written as the image of a suitable regularised theta lift. From now on, $L$ denotes an even lattice of full rank in the quadratic space $V$ of signature $(1,2)$ defined in Section~\ref{Section: quadratic space}, and $\Gamma_L$ is the subgroup of $\mathrm{O}(L)$ which fixes the classes of $L'/L$. Furthermore, throughout we let $k \in \N_{\geq 2}$.

	Let $\mu \in L'/L$ and $m \in \Z -  q(\mu)$ with $m > 0$ such that $Mm$ is not a square. Following \cite{bringmann2012theta} (where a scalar--valued version was used) we define the function
	\begin{align*}
	\mathcal{F}_{1-k,\mu,m}(z) &\coloneqq \frac{ (-1)^k (4Mm)^{\frac{1}{2} - k}}{\binom{2k-2}{k-1}\pi (2k-1)} 
	\sum_{\substack{X \in L+\mu \\ q(X)=-m}}  \sgn(p_X(z) )  Q_X(z)^{k-1} \left(\frac{4Mmy^2}{|Q_X(z)|^2}\right)^{k-\frac{1}{2}} \\
	& \qquad \qquad \qquad \qquad \qquad \qquad \qquad \qquad \times {}_2F_1\left(\frac{1}{2}, k-\frac{1}{2};k+\frac{1}{2};\frac{4Mmy^2}{|Q_X(z)|^2} \right).
	\end{align*}
	The Euler integral representation of the hypergeometric function (see \cite[equation~15.3.1]{abramowitz1988handbook}) yields
	\begin{align*}
	\mathcal{F}_{1-k,\mu,m}(z) = \frac{(-1)^k(4Mm)^{\frac{1}{2}-k}}{\binom{2k-2}{k-1}\pi}\sum_{\substack{X \in L+\mu \\ q(X)=-m}}  \sgn(p_X(z))Q_X(z)^{k-1}\psi\left( \frac{4Mmy^2}{|Q_X(z)|^2}\right) ,
	\end{align*}
	where $\psi(v) \coloneqq \frac{1}{2}\beta(v;k-\frac{1}{2},\frac{1}{2})$ is a special value of the incomplete $\beta$--function $\beta(w;s,r) \coloneqq \int_0^w t^{s-1}(1-t)^{r-1}dt$.
	In particular, by the same arguments as in \cite{bringmann2014locally} the function $\mathcal{F}_{1-k,m,\mu}$ converges absolutely and defines a locally harmonic Maass form of weight $2-2k$ for $\Gamma_L$. We recover the function $\mathcal{F}_{1-k,D}$ from \cite{bringmann2014locally} if we choose $M = 1, D = 4m$, and the lattice $L$ from the introduction.
	We have the following series representation of $R_{2-2k}^{k-1} (\mathcal{F}_{1-k,\mu,m})$. 
	
	\begin{proposition}\label{Proposition: raising of vec valued G}
		Assume that $p_X(z) \neq 0$ for every $X \in L+\mu$ with $q(X) = -m$. Then
		\begin{multline*}
		R_{2-2k}^{k-1} \left(\mathcal{F}_{1-k,\mu,m}\right) (z) \\
		 = \frac{(-1)^k (k-1)! y^k }{\binom{2k-2}{k-1}\pi (2k-1)} 
		 \sum_{\substack{X \in L+\mu \\ q(X)=-m}} \sgn(p_X(z))^{k}|Q_X(z)|^{-k} {}_2F_1 \left( \frac{k}{2}, \frac{k}{2}; k + \frac{1}{2} ; \frac{4Mmy^2}{|Q_X(z)|^2} \right).
		\end{multline*}
	\end{proposition}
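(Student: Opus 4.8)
Since $z$ is assumed to avoid all the geodesics $c_Q$ with $Q\in\mathcal{Q}_D$ on which a relevant vector vanishes, we have $p_X(z)\neq 0$ for every $X\in L+\mu$ with $q(X)=-m$, so each $\sgn(p_X(z))$ is locally constant near $z$. As in \cite{bringmann2014locally}, the series defining $\mathcal{F}_{1-k,\mu,m}$ and all of its $z$--derivatives of order $\le k-1$ converge locally uniformly, so $R_{2-2k}^{k-1}$ may be applied term by term and each $\sgn(p_X(z))$ passes through it. Thus it suffices to compute $R_{2-2k}^{k-1}$ applied to the single seed $Q_X(z)^{k-1}\psi(t_X(z))$, where $t_X(z)=4Mmy^{2}/\lvert Q_X(z)\rvert^{2}$. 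First I would record the four differential identities $\partial_z Q_X=-\tfrac iy Q_X+ip_X$, $\partial_z\overline{Q_X}=0$, $\partial_z p_X=\tfrac{i\,\overline{Q_X}}{2y^{2}}$ and $\partial_z y=\tfrac1{2i}$, all immediate from the explicit formulas of Section~\ref{Section: quadratic space} and \eqref{explicit projections}, together with the bookkeeping identity $R_{2-2k}^{k-1}=(2i)^{k-1}(y^{2}\partial_z)^{k-1}\circ\big(y^{2-2k}\,\cdot\,\big)$, which follows by iterating $R_\kappa g=2iy^{-\kappa}\partial_z(y^\kappa g)$ and observing that the powers of $y$ cancel exactly after $k-1$ raisings in weight $2-2k$.

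\textbf{Exposing an anti--holomorphic factor and applying Fa\`a di Bruno.} Using the Euler integral representation $\psi(v)=\tfrac{v^{k-1/2}}{2}\int_0^1\sigma^{k-3/2}(1-v\sigma)^{-1/2}d\sigma$ together with $\lvert Q_X(z)\rvert^{2}=y^{2}(p_X(z)^{2}+4Mm)$ and $1-t_X(z)\sigma=y^{2}w_X(\sigma,z)/\lvert Q_X(z)\rvert^{2}$, one rewrites the seed, with $w_X(\sigma,z):=p_X(z)^{2}+4Mm(1-\sigma)$, as
\[
Q_X(z)^{k-1}\psi(t_X(z))=\frac{(4Mm)^{k-1/2}}{2}\,\frac{y^{2k-2}}{\overline{Q_X(z)}^{\,k-1}}\int_0^1\sigma^{k-3/2}w_X(\sigma,z)^{-1/2}\,d\sigma,
\]
with no branch ambiguities since all of $\lvert Q_X\rvert^{2},w_X,t_X$ are positive. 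Multiplying by $y^{2-2k}$ removes the power of $y$, and because $\overline{Q_X}^{-(k-1)}$ is anti--holomorphic the operator $(y^{2}\partial_z)^{k-1}$ pulls it out. The structural point is that $w_X$ is ``quadratic'' for $y^{2}\partial_z$: one computes $(y^{2}\partial_z)w_X=i\,p_X\overline{Q_X}$, $(y^{2}\partial_z)^{2}w_X=-\tfrac12\overline{Q_X}^{\,2}$ and $(y^{2}\partial_z)^{j}w_X=0$ for $j\ge 3$. Hence Fa\`a di Bruno, with only the first two Bell--polynomial arguments nonzero, expresses $(y^{2}\partial_z)^{k-1}\big(w_X^{-1/2}\big)$ as an explicit finite sum over $0\le b\le\lfloor(k-1)/2\rfloor$ of terms proportional to $\overline{Q_X}^{\,k-1}\,p_X^{\,k-1-2b}\,w_X^{-(2k-1-2b)/2}$; the factor $\overline{Q_X}^{\,k-1}$ cancels the one already pulled out.

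\textbf{Evaluating the $\sigma$--integrals and the hypergeometric collapse.} After $w_X=p_X^{2}/s$ the integrals $\int_0^1\sigma^{k-3/2}w_X^{-(2k-1-2b)/2}d\sigma$ become incomplete Beta integrals, and a further affine substitution reduces them to $\int_0^1(1-\rho)^{k-3/2}(1-t_X\rho)^{-1-b}d\rho=\tfrac1{k-1/2}\,{}_2F_1(1+b,1;k+\tfrac12;t_X)$. Reassembling the $b$--sum and using $1-t_X=p_X^{2}/(p_X^{2}+4Mm)$ and $\lvert Q_X\rvert=y\sqrt{p_X^{2}+4Mm}$ to convert the surviving powers of $p_X$ and $y$ into powers of $y$, $\lvert Q_X\rvert$ and $\sgn(p_X)$ (the only sign factor being the original one, which combines with $p_X^{\,k-1}$ to give $\sgn(p_X)^{k}\lvert p_X\rvert^{k-1}$), the claim reduces to the purely hypergeometric identity
\[
\sum_{b=0}^{\lfloor(k-1)/2\rfloor}\frac{(2k-3-2b)!!\;i^{\,k-1-2b}}{(k-1-2b)!\,b!\,2^{\,k-1+b}}\;{}_2F_1\!\left(1+b,1;k+\tfrac12;t\right)=(\text{const})\cdot(1-t)^{k/2-1}\,{}_2F_1\!\left(\tfrac k2,\tfrac k2;k+\tfrac12;t\right),
\]
which I would verify for small $k$ and then establish in general by comparing power--series coefficients (equivalently via Zeilberger's algorithm or Gauss's contiguous relations). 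Finally one chases the constants — the powers of $2$, $i^{\,k-1}$, $(k-1)!$, the double factorials, $\binom{2k-2}{k-1}$, $2k-1$, and all $(4Mm)$--powers, which cancel against those in the definition of $\mathcal{F}_{1-k,\mu,m}$ — to land on the stated prefactor $\tfrac{(-1)^{k}(k-1)!\,y^{k}}{\binom{2k-2}{k-1}\pi(2k-1)}$.

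\textbf{Main obstacle and alternative.} The genuine content is the hypergeometric identity above (equivalently, recognizing the Fa\`a di Bruno output as a single ${}_2F_1$) and the careful constant--chasing; everything else is mechanical once the four differential identities are in hand. An essentially equivalent route, trading the single combinatorial identity for a family of contiguous relations, is to raise one step at a time: after $j$ raisings the hypergeometric factor is naturally ${}_2F_1\!\left(\tfrac{j+1}{2},\,k-\tfrac{j+1}{2};\,k+\tfrac12;\,t_X\right)$ — the numerator parameters summing to $k$ and $c-a-b=\tfrac12$ staying fixed — the inductive step using $\partial_t\,{}_2F_1(a,b;c;t)=\tfrac{ab}{c}\,{}_2F_1(a{+}1,b{+}1;c{+}1;t)$ followed by a contiguous relation restoring $c=k+\tfrac12$.
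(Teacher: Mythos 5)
Your route is genuinely different from the paper's. The paper never computes $R_{2-2k}^{k-1}$ of the seed head-on. Instead it runs the \emph{easy} induction in the opposite direction: using only $\partial_Z\bigl(Z^a\,{}_2F_1(a,b;c;Z)\bigr)=aZ^{a-1}{}_2F_1(a+1,b;c;Z)$ and the Euler transformation, it shows that $R_0^{k-1}$ applied to $g\coloneqq w^{k/2}{}_2F_1\bigl(\tfrac k2,\tfrac k2;k+\tfrac12;w\bigr)$, with $w=4Mmy^2/|Q_X(z)|^2$, reproduces (up to an explicit constant and the factor $\sgn(p_X)^{k-1}(Q_X(\bar z)/y^2)^{k-1}$) the expression $w^{k-\frac12}{}_2F_1\bigl(\tfrac12,k-\tfrac12;k+\tfrac12;w\bigr)$ occurring in the seed; since $g$ is an eigenfunction of $\Delta_0$ with eigenvalue $k(1-k)$, \cite[Lemma~2.1]{bringmann2019regularised} collapses $R_{2-2k}^{k-1}\bigl(y^{2k-2}\,\overline{R_0^{k-1}(g)}\bigr)$ to $(2k-2)!\,\overline{g}$, and the proposition drops out. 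Your closing ``alternative route'' is exactly this induction read backwards (your parameters $\tfrac{j+1}{2}$, $k-\tfrac{j+1}{2}$ are the paper's $\tfrac{k-j'}{2}$, $\tfrac{k+j'}{2}$ with $j'=k-1-j$), but run in the hard direction, where every step needs a contiguous relation; the eigenfunction lemma is precisely the device that lets the paper avoid that.

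Concerning your main route: the preparation is sound --- the four differential identities, the factorisation $R_{2-2k}^{k-1}=(2i)^{k-1}(y^2\partial_z)^{k-1}\circ(y^{2-2k}\,\cdot\,)$, the rewriting of the seed with the anti-holomorphic $\overline{Q_X}^{\,-(k-1)}$ pulled out, and the Fa\`a di Bruno computation with $(y^2\partial_z)^j w_X=0$ for $j\geq 3$ all check out. The gap is the ``hypergeometric collapse'', which is asserted rather than proven, and the identity as displayed is not the one your computation actually produces. Converting $p_X^{k-1-2b}$ via $|p_X|=(1-t_X)^{1/2}|Q_X|/y$ introduces $b$-dependent powers $(1-t_X)^{(k-1-2b)/2}$ and $(|Q_X|/y)^{k-1-2b}$ that are absent from your sum, and the $\sigma$-integral is not reduced to $\int_0^1(1-\rho)^{k-3/2}(1-t_X\rho)^{-1-b}\,d\rho$ by an affine substitution alone --- one needs an Euler transformation, which carries an extra $(1-t_X)^{\frac32+b-k}$. (Reassuringly, the $b$-dependence of these extraneous powers cancels: the $(1-t_X)$-exponents sum to $1-\tfrac k2$ and the $(|Q_X|/y)$-exponents to $-k$ for every $b$, so the structure of your claim survives; but the power of $1-t$ on your right-hand side then needs adjusting.) Until the corrected identity is written down and certified --- by Zeilberger, contiguous relations, or coefficient comparison --- the argument is incomplete at exactly the point you flag as its genuine content, whereas the paper's eigenfunction trick sidesteps the issue entirely.
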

	
	\begin{proof}
		It suffices to show that
		\begin{multline*}
		R_{2-2k}^{k-1} \left( Q_X(z)^{{k-1}}  \left( \frac{4Mmy^2}{|Q_X(z)|^2} \right)^{k-\frac{1}{2}} {}_2F_1\left(\frac{1}{2} ; k-\frac{1}{2};k+\frac{1}{2}; \frac{4Mmy^2}{|Q_X(z)|^2}\right)  \right) \\
		= (k-1)! (4Mm)^{k-\frac{1}{2}} \sgn(p_X(z))^{k-1} y^k |Q_X(z)|^{-k} {}_2F_1\left(\frac{k}{2}, \frac{k}{2}; k+\frac{1}{2} ; \frac{4Mmy^2}{|Q_X(z)|^2}\right).
		\end{multline*}
		Let $w \coloneqq \frac{4Mmy^2}{|Q_X(z)|^2}$. Using the Euler transformation 
		\[
		{}_2F_1(a,b;c;Z)=(1-Z)^{c-a-b}{}_2F_1(c-a,c-b;c;Z)
		\]
		and the identity (see \cite[equation~15.2.3]{abramowitz1988handbook}) 
		\[
		\partial_Z \lp Z^a {}_2F_1 (a,b;c;Z) \rp = a Z^{a-1}{}_2F_1(a+1,b;c;Z)
		\]
		it may be shown by induction that for $j \in \N_0$
		\begin{multline*}
		R_0^{j} \left( w^{\frac{k}{2}} {}_2F_1\left( \frac{k}{2}, \frac{k}{2}; k + \frac{1}{2}; w \right) \right)\\ = \frac{(k+j-1)!}{(k-1)!} (4Mm)^{-\frac j2} \sgn(p_X(z))^{j} \left( \frac{Q_X(\bar{z})}{y^2} \right)^j w^\frac{k+j}{2} {}_2F_1\left(\frac{k-j}{2} , \frac{k+j}{2} ; k+\frac{1}{2} ; w \right).
		\end{multline*}
		In particular, for $j = k-1$ this becomes
		\begin{multline}\label{Equation: R_0^k+1 vec valued}
		R_0^{k-1} \left( w^{\frac{k}{2}} {}_2F_1\left( \frac{k}{2}, \frac{k}{2}; k + \frac{1}{2}; w \right) \right)\\ = \frac{(2k-2)!}{(k-1)!} (4Mm)^{-\frac{k-1}{2}} \sgn(p_X(z))^{k-1} \left( \frac{Q_X(\bar{z})}{y^2} \right)^{k-1} w^{k-\frac{1}{2}} {}_2F_1\left(\frac{1}{2} , k-\frac{1}{2} ; k+\frac{1}{2} ; w \right).
		\end{multline}
		Furthermore, it is possible to show that $ w^\frac{k}{2}  {}_2F_1(\frac k2, \frac k2;k+\frac 12, w ) $ 
		is an eigenfunction under the Laplace operator $\Delta_0$ with eigenvalue $k(1-k)$. Now \cite[Lemma~2.1]{bringmann2019regularised} states that for $j\in \N_0$ and $g:\mathbb H \to \C$ satisfying $\Delta_0(g)=\lambda g$ we have
	\begin{equation*}
	R_{2-2k}^{k-1}\lp y^{2k-2} \overline{R_0^{k-1}\lp g \rp} \rp(z)
	=\prod_{\ell=1}^{k-1} \lp - \overline \lambda - \ell(\ell-1) \rp \overline{g(z)}. 
	\end{equation*}
		Thus we find that \eqref{Equation: R_0^k+1 vec valued} is equal to
		\begin{multline*}
		\frac{(k-1)!}{(2k-2)!} (4Mm)^{\frac{k-1}{2}} R_{2-2k}^{k-1} \left( y^{2k-2} \overline{R_0^{k-1}  \left(w^{\frac{k}{2}} {}_2F_1 \left( \frac{k}{2}, \frac{k}{2}; k + \frac{1}{2} ; w \right) \right)  }  \right) \\= (k-1)! (4Mm)^{k- \frac{1}{2}} \sgn(p_X(z))^{k-1} y^k |Q_X(z)|^{-k} {}_2F_1 \left( \frac{k}{2}, \frac{k}{2}; k + \frac{1}{2} ; w \right),
		\end{multline*}
		where we are using $
		\prod_{\ell=1}^{k-1} (k(1-k)-\ell(\ell-1))=(2k-2)!$
		and are inserting the definition of $w$.
	\end{proof}

For a harmonic Maass form $f \in H_{\frac{3}{2}-k,L}$ we consider the regularised theta lift
\begin{align*}
\Lambda^{\reg} \left(f, z\right)
\coloneqq
\begin{dcases}
\int_{\mathcal{F}}^{\reg} \left\langle R_{\frac{3}{2} - k}^{\frac{k}{2} -1}(f)(\tau) , \overline{\Theta_L(\tau,z)} \right\rangle v^{-\frac{1}{2}} d\mu(\tau),& \text{if }k \text{ is even},
\\
\int_{\mathcal{F}}^{\reg}  \left\langle R_{\frac{3}{2} -k}^{\frac{k-1}{2}} (f)(\tau), \overline{\Theta_L^* (\tau,z)} \right\rangle v^{\frac{1}{2}} d\mu(\tau),& \text{if } k \text{ is odd},
\end{dcases}
\end{align*}
where $d\mu(\tau): = \frac{dudv}{v^{2}}$ denotes the invariant measure on $\H$, and the regularised integral is defined by $\int_{\mathcal{F} }^{\reg} \coloneqq \lim_{T \rightarrow \infty} \int_{\mathcal{F}_T}$, where $\mathcal{F}_T$ denotes the standard fundamental domain for $\Gamma$ truncated at height $T$. By the results of \cite[Section~2.3]{bruinier2004borcherds} for the Siegel theta function (corresponding to $k$ even) and by \cite[Section~7.3]{bruinier2020greens} for the Millson theta function (corresponding to $k$ odd), the integral converges for every $z \in \H$.

We now compute the lift of the Maass Poincar\'{e} series by unfolding against it. Thereby we obtain the following representation of $R_{2-2k}^{k-1}(\mathcal{F}_{1-k,\mu,m})$ as a regularised theta lift.

\begin{theorem}\label{Theorem: raising of F_1-k,D in terms of theta lift}
	Assume that $p_X(z) \neq 0$ for every $X \in L+\mu$ with $q(X) = -m$. 
	If $k \in \N$ is even, then
	\begin{align*}
	R_{2-2k}^{k-1} \left(\mathcal{F}_{1-k,\mu,m}\right)(z) = \frac{(k-1)!^2  (4Mm)^{\frac{1}{2}-k} M^{\frac{k-1}{2}}}{2^{2k}\pi^{\frac{k}{2}}\Gamma\left( \frac{k}{2} \right)^2 }  \Lambda^{\reg} \left(F_{\mu,-m,\frac{3}{2}-k}, z\right).
	\end{align*}
	If $k \in \N$ is odd, then
	\begin{align*}
	R_{2-2k}^{k-1} \lp \mathcal{F}_{1-k,\mu,m} \rp (z) = - \frac{(k-1)!^2 (4Mm)^{\frac{1}{2}-k}M^{\frac{k}{2}-1}   }{2^{2k}\pi^{\frac{k-3}{2}}\Gamma\left(\frac{k+1}{2}\right)^2  }  \Lambda^{\reg} \left(F_{\mu,-m,\frac{3}{2}-k}, z\right).
	\end{align*}
\end{theorem}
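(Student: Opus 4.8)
The plan is to compute the regularised theta lift $\Lambda^{\reg}(F_{\mu,-m,\frac32-k},z)$ by the standard unfolding trick against the Maass--Poincar\'e series, and to match the resulting expression with the series representation of $R_{2-2k}^{k-1}(\mathcal{F}_{1-k,\mu,m})$ from Proposition~\ref{Proposition: raising of vec valued G}. First I would use Lemma~\ref{Lemma: R_k^n acting on F_m,h} to write, in the even case, $R_{\frac32-k}^{\frac k2-1}(F_{\mu,-m,\frac32-k})$ as an explicit constant times $F_{\mu,-m,\frac32-k+2(\frac k2-1)}=F_{\mu,-m,-\frac12}$ (the parameter $s=1-\frac{\kappa}{2}$ moves accordingly along the chain), and in the odd case similarly obtain a Poincar\'e series of the relevant weight; the constant is the ratio of Gamma factors $\Gamma(s+n+\frac\kappa2)/\Gamma(s+\frac\kappa2)$ with $\kappa=\frac32-k$, $n=\frac k2-1$ (resp. $\frac{k-1}{2}$), and this is where the powers of $4Mm$ and the Gamma-quotients in the statement originate.

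Next I would insert the defining sum over $\widetilde\Gamma_\infty\backslash\widetilde\Gamma$ of the (raised) Poincar\'e series into the regularised integral, and unfold: the sum over cosets unfolds the integral over $\mathcal{F}$ to an integral over the strip $\widetilde\Gamma_\infty\backslash\H$, using the $\Gamma$-invariance of $\overline{\Theta_L(\tau,z)}$ (resp. $\overline{\Theta_L^*(\tau,z)}$) in $\tau$. After unfolding, the pairing $\langle \mathcal{M}_{\kappa,s}(-4\pi m v)e(-mu)\e_\mu,\overline{\Theta_L(\tau,z)}\rangle$ picks out, via the $u$-integral over $[0,1]$, exactly the Fourier coefficient of $\overline{\Theta_L}$ of index $m$ in the $\e_\mu$-component; this collapses the lattice sum in $\Theta_L$ to the sum over $X\in L+\mu$ with $q(X_{z^\perp})=-\text{(something)}$ and the appropriate $q(X_z)$, i.e. effectively over $X$ with $q(X)=-m$ — this uses the hypothesis $Mm\notin(\Q^\times)^2$ to rule out degenerate contributions and the hypothesis $p_X(z)\neq0$ to stay off the singularities. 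What remains is a single $v$-integral of the shape $\int_0^\infty M_{\sgn,\cdot,\cdot}(4\pi m v)\exp(-2\pi(\text{stuff})v)v^{\beta}\frac{dv}{v}$ for each $X$, which is a standard Laplace transform of an $M$-Whittaker function (cf. \cite[equation~13.1.32]{abramowitz1988handbook} together with the Laplace-transform tables, exactly as in \cite{bruinier2004borcherds}) and evaluates to a ${}_2F_1$ in the variable $\frac{4Mmy^2}{|Q_X(z)|^2}$, together with the expected power $y^k|Q_X(z)|^{-k}$ and a sign $\sgn(p_X(z))^{k}$ coming from the $\sgn$ in $\mathcal{M}_{\kappa,s}$ and (in the odd case) the extra factor $p_X(z)$ in the Millson theta function.

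Finally I would compare the resulting series term-by-term with the right-hand side of Proposition~\ref{Proposition: raising of vec valued G}: both are sums over $X\in L+\mu$, $q(X)=-m$, of $\sgn(p_X(z))^k|Q_X(z)|^{-k}y^k\,{}_2F_1(\frac k2,\frac k2;k+\frac12;\frac{4Mmy^2}{|Q_X(z)|^2})$, so the identity reduces to matching the scalar prefactors, which is a bookkeeping exercise collecting the normalisation of $\mathcal{F}_{1-k,\mu,m}$, the binomial $\binom{2k-2}{k-1}$, the factor $(2k-1)$, the Gamma-quotient from Lemma~\ref{Lemma: R_k^n acting on F_m,h}, and the constants from the Whittaker-Laplace integral; one checks $\Gamma(\frac k2)^2$ (resp. $\Gamma(\frac{k+1}{2})^2$) and the powers of $2$, $\pi$, $M$, $4Mm$ come out as stated. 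I expect the main obstacle to be the exact evaluation of the $v$-integral with all constants tracked correctly — in particular getting the right power of $\pi$ and the $2^{2k}$ — since the Whittaker normalisation, the factor $\frac{1}{2\Gamma(2s)}$ in the Poincar\'e series, and the duplication formula for $\Gamma(2s)$ at $s=\frac{2k-1}{4}$ (even case) all feed into it; the odd case additionally requires carefully handling the half-integral shift in $s$ and the extra $p_X(z)$ factor, which is why the two formulas differ by a power of $M$ and $\pi$ and an overall sign.
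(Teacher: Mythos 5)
Your proposal is correct and follows essentially the same route as the paper's proof: apply Lemma~\ref{Lemma: R_k^n acting on F_m,h}, unfold the raised Poincar\'e series against the (Siegel or Millson) theta function, evaluate the resulting $v$--integral as a Laplace transform of the $M$--Whittaker function to get a ${}_2F_1$, and match constants against Proposition~\ref{Proposition: raising of vec valued G}. One precision worth making explicit: the raising chain keeps the spectral parameter fixed, so the raised series is $F_{\mu,-m,-\frac{1}{2},s}$ at $s=\tfrac{k}{2}+\tfrac{1}{4}$ (the harmonic point of the \emph{original} weight $\tfrac{3}{2}-k$), not the harmonic Maass form of weight $-\tfrac{1}{2}$; it is precisely this value of $s$ that yields ${}_2F_1\left(\tfrac{k}{2},\tfrac{k}{2};k+\tfrac{1}{2};\cdot\right)$ after the Laplace transform, and also note that for even $k$ the factor $\sgn(p_X(z))^{k}$ is trivial rather than coming from the sign in $\mathcal{M}_{\kappa,s}$.
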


\begin{proof}
	A similar result was proved in \cite[Theorem~2.14]{bruinier2004borcherds} and \cite[Theorem~7.9]{bruinier2020greens}.
 Here we give a sketch of the proof for $k$ even for the convenience of the reader; the case $k$ odd follows similarly. We consider the regularised theta lift of the Maass Poincar\'e series $F_{\mu,-m,\frac{3}{2}-k,s}$. Applying Lemma~\ref{Lemma: R_k^n acting on F_m,h} we obtain
	\begin{equation*}
	\Lambda^{\text{reg}} \left(F_{\mu,-m,\frac{3}{2}-k,s}, z\right) 
	= (4\pi m)^{\frac{k}{2}-1} \frac{\Gamma\left(s-\frac{1}{4}\right)}{\Gamma\left(s+\frac{3}{4} - \frac{k}{2}\right)} \int_{\mathcal{F}}^{\reg} \left\langle  F_{\mu,-m,-\frac{1}{2},s}\left(\tau\right), \overline{\Theta_L (\tau,z)} \right\rangle v^{-\frac{1}{2}} d\mu(\tau).
	\end{equation*}
	By the usual unfolding argument the above expression can be written as
	\begin{align*}
	2(4\pi m)^{\frac{k}{2}-1} \frac{\Gamma\left(s-\frac{1}{4}\right)}{\Gamma(2s)\Gamma\left(s+\frac{3}{4} - \frac{k}{2}\right)} \int_{0}^{\infty} \int_{0}^{1} \mathcal{M}_{-\frac{1}{2},s}(-4 \pi m v)e(-mu)   \overline{\Theta_{L,\mu}( \tau, z)} v^{-\frac{5}{2}} dudv,
	\end{align*}
	where $\Theta_{L,\mu}$ denotes the $\mu$--th component of $\Theta_L$. Inserting the Fourier expansion of $\Theta_L$ given in \eqref{Definition: Siegel theta} and the definition of $\mathcal{M}_{-\frac{1}{2},s}$ given in \eqref{curlyM}, and evaluating the integral over $u$, this becomes
	\begin{align*}
	2(4\pi m)^{\frac{k-1}{2}} \frac{\Gamma\left(s-\frac{1}{4}\right)}{\Gamma(2s)\Gamma\left(s+\frac{3}{4} - \frac{k}{2}\right)}\sum_{\substack{X \in L+\mu \\ q(X) = -m}} \int_{0}^{\infty}   M_{\frac{1}{4},s-\frac{1}{2}}(4 \pi m v) v^{-\frac{5}{4}}   e^{-2 \pi  v\left(q(X_z) - q\left(X_{z^\perp}\right)\right)} dv.
	\end{align*}
	The integral is an inverse Laplace transform and can be computed using equation (11) on page 215 of \cite{IntegralTransforms}. We obtain
	\begin{align*}
	2(4\pi m)^{\frac{k-1}{2}} \frac{\Gamma\left(s-\frac{1}{4}\right)^2}{\Gamma(2s)\Gamma\left(s+\frac{3}{4} - \frac{k}{2}\right)} \sum_{\substack{X \in L+\mu \\ q(X) = -m}}\left(\frac{m}{|q(X_{z^\perp})|}\right)^{s-\frac{1}{4}}  {}_2F_1 \left( s-\frac{1}{4}, s-\frac{1}{4} ; 2s; \frac{m}{|q(X_{z^\perp})|} \right).
	\end{align*}
	Plugging in the formula $q(X_{z^\perp}) = -\frac{1}{4Mmy^2}|Q_X(z)|^2$ (see \eqref{explicit projections}) and the special value $s = \frac{k}{2}+\frac{1}{4}$, we arrive at
	\[
	2(4\pi m)^\frac{k-1}{2}\frac{\Gamma\left(\frac{k}{2}\right)^2}{\Gamma\left(k+\frac{1}{2}\right)}\sum_{\substack{X \in L+\mu \\ q(X) = -m}}\left(\frac{4Mmy^2}{|Q_X(z)|^2}\right)^{\frac{k}{2}}  {}_2F_1 \left( \frac{k}{2}, \frac{k}{2}; k+\frac{1}{2}; \frac{4Mmy^2}{|Q_X(z)|^2} \right).
	\]
	Using the Legendre duplication formula $\pi^{\frac{1}{2}}\Gamma(2k) = 2^{2k-1}\Gamma(k)\Gamma(k+\frac{1}{2})$ and comparing the above expression with Proposition~\ref{Proposition: raising of vec valued G}, we obtain the stated result.
\end{proof}

\section{Evaluation of the theta lift at CM points}\label{Section: theta integral at CM}

We now evaluate the theta integral at CM points. As in the previous section we let $L$ denote an even lattice of full rank in the signature $(1,2)$ quadratic space $V$ from Section~\ref{Section: quadratic space}, and we let $\Gamma_L$ be the subgroup of $\mathrm{O}(L)$ which fixes the classes of $L'/L$. Moreover, we fix some $X_0 \in L'$ with $q(X_0) > 0$, and we set $\mathcal{A} = \Gamma_LX_0$ and $z_\mathcal{A} = \R X_0 \in \mathcal{D} \cong \H$. Then we have the sublattices $P = L \cap z_\mathcal{A}$ and $N = L \cap z_\mathcal{A}^\perp$.

	Recall that $\mathcal{G}_P$ denotes a harmonic Maass form of weight $\frac{3}{2}$ for $\rho_P$ that maps to $\Theta_P$ under $\xi_{\frac 32}$. Similarly, we let $\mathcal{G}_P^*$ be a harmonic Maass form of weight $\frac{1}{2}$ for $\rho_P$ that maps to $\Theta_P^*$ under $\xi_{\frac 12}$. For simplicity, we now assume that the input $f$ for the regularised theta lift is weakly holomorphic. We have the following theorem, which is inspired by a similar recent result of Bruinier, Ehlen, and Yang (compare \cite[Theorem~5.4]{bruinier2020greens}).
	
	\begin{theorem}\label{Theorem: theta lift at CM points}
		Let $f \in M_{\frac{3}{2}-k, L}^!$. For $k$ even we have 
	\begin{align*}	\Lambda^{\reg} \left(f, z_\mathcal{A}\right)
	= \frac{ \pi^{\frac{1}{2}}\Gamma\left(\frac{k}{2}\right) }{2 (4\pi)^{1-\frac{k}{2}}  \Gamma\left( \frac{k+1}{2} \right)} \CT\left(\left\langle f_{P \oplus N}(\tau) ,  \left[ \mathcal{G}_{P}^+(\tau), \Theta_{N^{-}}(\tau)\right]_{\frac{k}{2}-1}  \right\rangle \right).
		\end{align*}
	For $k$ odd we have
	\begin{align*}
	\Lambda^{\reg} \left(f, z_\mathcal{A}\right) =   \frac{\pi^{\frac{1}{2}} \Gamma\left(\frac{k+1}{2}\right) }{(4\pi)^{\frac{1-k}{2}}\Gamma\left(\frac{k}{2}\right)} \CT\left( \left\langle f_{P \oplus N}(\tau) , \left[{\mathcal{G}_{P}^{*,+} }(\tau), \Theta_{N^{-}} (\tau)\right]_{\frac{k-1}{2}} \right\rangle \right).
	\end{align*}
	\end{theorem}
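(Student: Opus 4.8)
The plan is to follow the method of Bruinier, Ehlen, and Yang \cite[Theorem~5.4]{bruinier2020greens}: reduce the lattice, split the theta kernel at the CM point, integrate by parts to move all the raising operators in the lift onto the holomorphic theta factor, and evaluate the resulting integral of an exact form by Stokes' theorem. I describe the even case; the odd case runs entirely parallel, with $\Theta_L$, $\mathcal G_P$, $\xi_{3/2}$, and $\frac k2-1$ replaced by $\Theta_L^*$, $\mathcal G_P^*$, $\xi_{1/2}$, and $\frac{k-1}2$. First I would pass from $L$ to the finite--index sublattice $P\oplus N$: since $R_{3/2-k}^{k/2-1}$ acts componentwise on $\C[L'/L]$--valued functions it commutes with $\mathrm{res}_{L/(P\oplus N)}$, so $\Theta_L=(\Theta_{P\oplus N})^L$ from \eqref{Equation: trace map for Theta_L} together with the adjointness $\langle f,\overline g^L\rangle=\langle f_K,\overline g\rangle$ of Lemma~\ref{Lemma: restriction and trace maps} replaces $f$ by $f_{P\oplus N}$ and $\Theta_L(\tau,z_\mathcal A)$ by $\Theta_{P\oplus N}(\tau,z_\mathcal A)$ in $\Lambda^{\reg}$. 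Then the splitting \eqref{Equation: splitting}, $\Theta_{P\oplus N}(\tau,z_\mathcal A)=\Theta_P(\tau)\otimes v\overline{\Theta_{N^-}(\tau)}$, combined with $\xi_{3/2}(\mathcal G_P)=\Theta_P$ — i.e. $v^{1/2}\overline{\Theta_P}=L_{3/2}(\mathcal G_P)$ — makes all powers of $v$ collapse, leaving $\Lambda^{\reg}(f,z_\mathcal A)=\int_{\mathcal F}^{\reg}\langle R_{3/2-k}^{k/2-1}(f_{P\oplus N}),\,L_{3/2}(\mathcal G_P)\otimes\Theta_{N^-}\rangle\, d\mu$.

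Next I would integrate by parts $\frac k2-1$ times, each step using the product--rule identity $\langle R F,\Phi\rangle\, d\mu=-d\big(\langle F,\Phi\rangle\,v^{-2}\,d\overline\tau\big)-\langle F,R(\Phi)\rangle\, d\mu$ for suitably weighted $F$ and $\Phi$. The key point is that $R$ is a derivation on tensor products and $R_{-1/2}\circ L_{3/2}=-\Delta_{3/2}$ kills the harmonic form $\mathcal G_P$; hence at each step the summand in which the raising operator hits the $L_{3/2}(\mathcal G_P)$--factor vanishes, and after $\frac k2-1$ steps all the raising operators have migrated onto $\Theta_{N^-}$, giving
\[
\Lambda^{\reg}(f,z_\mathcal A)=(-1)^{k/2-1}\int_{\mathcal F}^{\reg}\Big\langle f_{P\oplus N},\, L_{3/2}(\mathcal G_P)\otimes R_1^{k/2-1}(\Theta_{N^-})\Big\rangle\, d\mu
\]
up to boundary terms along $\partial\mathcal F_T$ which one checks vanish as $T\to\infty$. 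Applying Proposition~\ref{Proposition: lowering of RC brackets} to $\mathcal G_P$ and the \emph{holomorphic} form $\Theta_{N^-}$, whose second summand drops because $L_1(\Theta_{N^-})=0$, identifies $L_{3/2}(\mathcal G_P)\otimes R_1^{k/2-1}(\Theta_{N^-})$ with an explicit multiple of $L_{k+1/2}([\mathcal G_P,\Theta_{N^-}]_{k/2-1})$.

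Writing $G\coloneqq[\mathcal G_P,\Theta_{N^-}]_{k/2-1}$ and using that $f_{P\oplus N}$ is weakly holomorphic together with the bilinearity of $\langle\cdot,\cdot\rangle$, one gets $\langle f_{P\oplus N},L_{k+1/2}(G)\rangle\, d\mu=-d\big(\langle f_{P\oplus N},G\rangle\, d\tau\big)$, a total differential. Stokes' theorem on $\mathcal F_T$ and the limit $T\to\infty$ then make the contributions of the two vertical sides and of the lower arc cancel by $\Gamma$--invariance of the weight--$2$ form $\langle f_{P\oplus N},G\rangle\, d\tau$, while the top segment $v=T$ extracts the zeroth Fourier coefficient of $\langle f_{P\oplus N},G\rangle$; since the non--holomorphic part of $\mathcal G_P$ (hence of $G$) decays exponentially and $f$ has only finitely many principal--part terms, only $\CT(\langle f_{P\oplus N},[\mathcal G_P^+,\Theta_{N^-}]_{k/2-1}\rangle)$ survives in the limit. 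Collecting the constants — the factor from Proposition~\ref{Proposition: lowering of RC brackets}, the sign $(-1)^{k/2-1}$ (which merges with $(-4\pi)^{k/2-1}$ into $(4\pi)^{k/2-1}$), and $\Gamma(\tfrac32)=\tfrac{\sqrt\pi}2$ — reproduces exactly the normalisation in the statement.

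The hard part will be making the analytic steps rigorous despite the regularisation: justifying the $\frac k2-1$ integrations by parts on the a priori only conditionally convergent integral, and showing that the boundary terms these produce, together with the side and arc contributions at the final Stokes step, all cancel or vanish as $T\to\infty$. This requires precise control of the growth near $i\infty$ of $R_{3/2-k}^{j}(f_{P\oplus N})$ paired against $L_{3/2}(\mathcal G_P)\otimes R_1^{j}(\Theta_{N^-})$, which is delicate precisely because $f$ has a pole at the cusp while the non--holomorphic pieces of $\mathcal G_P$ and of its derivatives only decay rather than vanish. By contrast, the algebraic heart — the vanishing $R_{-1/2}\circ L_{3/2}(\mathcal G_P)=0$ from harmonicity, and the collapse of Proposition~\ref{Proposition: lowering of RC brackets} to a single term — is routine once the setup is in place.
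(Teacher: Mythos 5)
Your proposal is correct and follows essentially the same route as the paper's proof: reduction to $P\oplus N$ via Lemma~\ref{Lemma: restriction and trace maps} and \eqref{Equation: trace map for Theta_L}, moving the iterated raising operator onto the theta kernel (the paper phrases your term-by-term integration by parts as the self-adjointness of $R$ from \cite[Lemma~4.2]{bruinier2004borcherds}, and your observation that $R_{-1/2}L_{3/2}(\mathcal{G}_P)=0$ is exactly its identity $R_{\ell-\kappa}(v^{\kappa}\overline{g}\otimes h)=v^{\kappa}\overline{g}\otimes R_{\ell}(h)$ applied after the splitting \eqref{Equation: splitting}), then Proposition~\ref{Proposition: lowering of RC brackets} with $L_1(\Theta_{N^-})=0$, and finally Stokes' theorem as in \cite[Proposition~3.5]{bruinierfunke2004}. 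The analytic issues you flag (boundary terms in the regularised integration by parts and the surviving constant term at $v=T$) are precisely the ones the paper disposes of by citation, so nothing essential is missing.
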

	
\begin{proof}	
	We give the details of the proof for $k$ even, since the proof for $k$ odd is very similar. Note that Lemma~\ref{Lemma: restriction and trace maps} and \eqref{Equation: trace map for Theta_L} imply that
\begin{equation*}
\langle f, \Theta_L \rangle = \left\langle f, (\Theta_{P \oplus N})^L\right\rangle = \langle f_{P \oplus N}, \Theta_{P\oplus N}\rangle.
\end{equation*}
Thus we may assume that $L = P \oplus N$ if we replace $f$ by $f_{P \oplus N}$. For simplicity, we write just $f$ instead of $f_{P \oplus N}$ throughout the proof.

	First, using the self--adjointness of the raising operator (see \cite[Lemma~4.2]{bruinier2004borcherds}) we obtain
	\begin{align*}
	 \int_{\mathcal{F}}^{\reg} \left\langle R_{\frac{3}{2} - k}^{\frac{k}{2} -1} (f)(\tau) , \overline{\Theta(\tau,z_\mathcal{A})} \right\rangle v^{-\frac{1}{2}} d\mu(\tau) 
	= & (-1)^{\frac{k}{2} -1} \int_{\mathcal{F}}^{\reg} \left\langle  f(\tau) , R_{\frac{1}{2}}^{\frac{k}{2} -1} \left(v^{-\frac{1}{2}}\overline{\Theta_L(\tau,z_\mathcal{A})}\right) \right\rangle  d\mu(\tau).
	\end{align*}
	Note that the apparent boundary term appearing disappears in the same way as in the proof of \cite[Lemma~4.4]{bruinier2004borcherds}. Using the splitting \eqref{Equation: splitting} of the Siegel theta function and the formula
	\[
	R_{\ell-\kappa}\left(v^{\kappa}\overline{g(\tau)} \otimes h(\tau)\right) = v^\kappa \overline{g(\tau)} \otimes R_\ell(h)(\tau)
	\]
	which holds for every holomorphic function $g$, every smooth function $h$, and $\kappa,\ell \in \R$, we obtain
	\begin{align*}
	R_{\frac{1}{2}}^{\frac{k}{2} -1} \left( v^{-\frac{1}{2}} \overline{\Theta_{P \oplus N} (\tau,z_\mathcal{A})} \right) = L_\frac{3}{2} (\mathcal{G}_{P}) (\tau)\otimes R_{1}^{\frac{k}{2}-1} (\Theta_{N^{-}}) (\tau).
	\end{align*}
	Since 
	$L_1(\Theta_{N^-}) = 0$, Proposition~\ref{Proposition: lowering of RC brackets} implies that 
	\begin{equation*}
	L_\frac{3}{2}(\mathcal{G}_P)(\tau) \otimes R_{1}^{\frac{k}{2}-1}(\Theta_{N^{-}}) (\tau) = \frac{\pi^{\frac{1}{2}} \Gamma\left(\frac{k}{2}\right)  }{2 \Gamma\left( \frac{k+1}{2} \right)} (-4\pi)^{\frac{k}{2} -1} L_{k+\frac{1}{2}}\left(\left[ \mathcal{G}_P(\tau) , \Theta_{N^{-}}(\tau)\right]_{\frac{k}{2}-1}\right).
	\end{equation*}
	Hence we have that
	\begin{multline*}
	\int_{\mathcal{F}}^{\reg} \left\langle R_{\frac{3}{2} - k}^{\frac{k}{2} -1} (f)(\tau) , \overline{\Theta_L(\tau,z_\mathcal{A})} \right\rangle v^{-\frac{1}{2}} d\mu(\tau) \\
	=  \frac{ \pi^{\frac{1}{2}}\Gamma\left(\frac{k}{2}\right)  }{2 \Gamma\left( \frac{k+1}{2} \right)} (4\pi)^{\frac{k}{2} -1}  \int_{\mathcal{F}}^{\reg} \left\langle f(\tau),  L_{k+\frac{1}{2}}\left(\left[ \mathcal{G}_P(\tau) , \Theta_{N^{-}}(\tau)\right]_{\frac{k}{2}-1}\right) \right\rangle d \mu(\tau).
	\end{multline*}
	Now a standard application of Stokes' Theorem as in the proof of \cite[Proposition~3.5]{bruinierfunke2004} gives the stated formula.
\end{proof}

\section{Statement of the main results and the proof of Theorem~\ref{Theorem: main, intro}}\label{Section: main results}

	We are now ready to state and prove our main result, which is a more general version of Theorem~\ref{Theorem: main, intro} for arbitrary congruence subgroups and both even and odd $k \in \N_{\geq2}$.

As before we let $L$ denote an even lattice of signature $(1,2)$ in the quadratic space $V$ from Section~\ref{Section: quadratic space}, and we let $\Gamma_L$ be the subgroup of $\mathrm{O}(L)$ which fixes the classes of $L'/L$. We can view $\Gamma_L$ as a subgroup of $\SL_2(\R)$, the action on $\mathcal{D}$ corresponding to fractional linear transformations on $\H$. Moreover, we fix some $X_0 \in L'$ with $q(X_0) > 0$, and we set $\mathcal{A} = \Gamma_LX_0$ and $z_\mathcal{A} = \R X_0 \in \mathcal{D} \cong \H$. We have the corresponding sublattices $P = L \cap z_\mathcal{A}$ and $N = L \cap z_\mathcal{A}^\perp$.

Generalising \eqref{Equation: definition fkA} we define the meromorphic modular form
\[
f_{k,\mathcal{A}}(z) \coloneqq \frac{|4Mq(\mathcal{A})|^{\frac{k+1}{2}}}{\pi}\sum_{X \in \mathcal{A}}Q_X(z,1)^{-k}
\]
of weight $2k$ for $\Gamma_L$. Furthermore, for $\mu \in L'/L$ and $m \in \Z- q(\mu)$ with $Mm > 0$ not being a square, we define the trace of cycle integrals
\[
\tr_{f_{k,\mathcal{A}}}(\mu,m) \coloneqq \sum_{X \in \Gamma_L\backslash L_{\mu,-m}}\int_{c_X}f_{k,\mathcal{A}}(z)Q_X(z,1)^{k-1}dz,
\]
where $L_{\mu,-m}$ denotes the set of all $X \in L+\mu$ with $q(X) = -m$, and $c_X \coloneqq (\Gamma_L)_X \backslash C_X$ with the geodesic $C_X \coloneqq \{z \in \H: p_X(z) = 0\} = \{z \in \H: aM|z|^2 + bx + c = 0\}$.

We let $\mathcal{G}_P$ be a harmonic Maass form of weight $\frac{3}{2}$ for $\rho_P$ that maps to $\Theta_P$ under $\xi_{\frac 32}$. Similarly, we let $\mathcal{G}_P^*$ be a harmonic Maass form of weight $\frac{1}{2}$ for $\rho_P$ that maps to $\Theta_P^*$ under $\xi_{\frac 12}$.
Finally, we let $f \in M_{\frac{3}{2}-k,L}^!$ be a weakly holomorphic modular form with Fourier coefficients $c_f(\mu,m)$ and we assume that $c_f(\mu,-m) = 0$ if $Mm > 0$ is a square.

 \begin{theorem}\label{Theorem: main result}
	Assume that $z_\mathcal{A}$ does not lie on any of the geodesics $c_X$ for $X \in L_{\mu,-m}$ if $c_f(\mu,-m) \neq 0$. For $k$ even we have
	\begin{multline*}
	\sum_{\mu \in L'/L}\sum_{m > 0}c_f(\mu,-m)\tr_{f_{k,\mathcal{A}}}(\mu,-m) \\
	= \frac{2^{k-3}|4Mq(\mathcal{A})|^{\frac{1}{2}}}{\pi M^{\frac{1-k}{2}} \left| \big(\overline{\Gamma}_L\big)_{z_{\mathcal{A}} }\right| }  \CT\left(\left\langle f_{P \oplus N}(\tau) ,  \left[ \mathcal{G}^+_P(\tau) , \Theta_{N^{-}} (\tau)\right]_{\frac{k}{2}-1}  \right\rangle \right).
	\end{multline*}
	For $k$ odd we have
	\begin{multline*}
	\sum_{\mu \in L'/L}\sum_{m > 0}c_f(\mu,m)\tr_{f_{k,\mathcal{A}}} (\mu,m) \\
	= -\frac{ 2^{k-1} \pi |4Mq(\mathcal{A})|^{\frac{1}{2}}  }{M^{1-\frac{k}{2}} \left| \big(\overline{\Gamma}_L\big)_{z_{\mathcal{A}} }\right| } \CT\left(\left\langle f_{P \oplus N}(\tau) ,  \left[ \mathcal{G}^{*,+}_P(\tau) , \Theta_{N^{-}} (\tau)\right]_{\frac{k-1}{2}}  \right\rangle \right).
	\end{multline*}
 \end{theorem}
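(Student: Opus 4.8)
The plan is to assemble the three ingredients already established in the excerpt. First, I would invoke \cite[Corollary~4.3]{lobrich2019meromorphic} (or rather its vector-valued analogue, as recalled in the introduction) to rewrite each trace of cycle integrals as a special value of the iterated raising operator applied to the locally harmonic Maass form: one has $\tr_{f_{k,\mathcal{A}}}(\mu,m) \doteq (Mm)^{k-\frac12} R_{2-2k}^{k-1}(\mathcal{F}_{1-k,\mu,m})(z_\mathcal{A})$, with an explicit constant that I would track carefully (this is where the powers of $M$, the factors $(4Mm)^{\frac12-k}$, and the binomial $\binom{2k-2}{k-1}$ enter). The hypothesis that $z_\mathcal{A}$ avoids the geodesics $c_X$ is exactly the condition $p_X(z_\mathcal{A}) \neq 0$ needed for Proposition~\ref{Proposition: raising of vec valued G} and Theorem~\ref{Theorem: raising of F_1-k,D in terms of theta lift} to apply at $z = z_\mathcal{A}$.

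Next, I would sum over $\mu \in L'/L$ and $m > 0$ against the coefficients $c_f(\mu,-m)$. By \eqref{Equation: basis} the weakly holomorphic form $f$ decomposes as $\tfrac12\sum_{\mu,m}c_f(\mu,-m)F_{\mu,-m,\frac32-k}$, so Theorem~\ref{Theorem: raising of F_1-k,D in terms of theta lift} — applied termwise and then summed by linearity — converts $\sum_{\mu,m}c_f(\mu,-m)(Mm)^{k-\frac12}R_{2-2k}^{k-1}(\mathcal{F}_{1-k,\mu,m})(z_\mathcal{A})$ into a single constant times $\Lambda^{\reg}(f, z_\mathcal{A})$. Here I must be mindful that the regularised theta lift is a linear operation in $f$ and that the special value $s = 1-\frac{\kappa}{2}$ for $\kappa = \frac32-k$ corresponds to $s = \frac{k}{2}+\frac14$, matching the specialisation used in the proof of that theorem; the constant from Theorem~\ref{Theorem: raising of F_1-k,D in terms of theta lift} is the same for every $(\mu,m)$, so it factors out cleanly.

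Finally, I would apply Theorem~\ref{Theorem: theta lift at CM points} to evaluate $\Lambda^{\reg}(f, z_\mathcal{A})$ in terms of the constant term of the Rankin--Cohen bracket $[\mathcal{G}_P^+, \Theta_{N^-}]_{\frac{k}{2}-1}$ (for $k$ even; the bracket $[\mathcal{G}_P^{*,+}, \Theta_{N^-}]_{\frac{k-1}{2}}$ for $k$ odd). Chaining the three identities and collecting all multiplicative constants — the one from Löbrich's formula, the one from Theorem~\ref{Theorem: raising of F_1-k,D in terms of theta lift}, and the one from Theorem~\ref{Theorem: theta lift at CM points} — yields the claimed formula, with the factor $\frac{1}{|(\overline{\Gamma}_L)_{z_\mathcal{A}}|}$ arising from the relation between the $\Gamma_L$-orbit sum defining $\tr_{f_{k,\mathcal{A}}}$ and the orthogonal-group normalisation implicit in $\Theta_L$. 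I expect the main obstacle to be purely bookkeeping: verifying that the product of the three constants simplifies to $\frac{2^{k-3}|4Mq(\mathcal{A})|^{\frac12}}{\pi M^{\frac{1-k}{2}}}$ (respectively $-\frac{2^{k-1}\pi|4Mq(\mathcal{A})|^{\frac12}}{M^{1-\frac{k}{2}}}$), which requires repeated use of the Legendre duplication formula to reconcile the $\Gamma(\frac{k}{2})$, $\Gamma(\frac{k+1}{2})$, $(k-1)!$, and $(2k-2)!$ factors, and a careful check that the stabiliser-index factors and the $(4Mm)^{\frac12-k}$ powers cancel against the $D^{k-\frac12}=(4Mm)^{k-\frac12}$ weights — a subtle but routine computation. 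The specialisation to the lattice $L$, $M=1$, $D = 4m$, and the translation into the plus-space language of Example~\ref{Example: Hurwitz class numbers} and Theorem~\ref{Theorem: main, intro} is then immediate from \cite[Section~5]{eichlerzagier}.
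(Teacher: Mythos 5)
Your proposal is correct and follows essentially the same three-step route as the paper: the L\"obrich--Schwagenscheidt formula expressing $\tr_{f_{k,\mathcal{A}}}(\mu,m)$ via $R_{2-2k}^{k-1}(\mathcal{F}_{1-k,\mu,m})(z_\mathcal{A})$ (which is also where the paper's factor $\big|\big(\overline{\Gamma}_L\big)_{z_\mathcal{A}}\big|^{-1}$ actually enters, rather than from the theta-function normalisation), then the Poincar\'e-series decomposition \eqref{Equation: basis} combined with Theorem~\ref{Theorem: raising of F_1-k,D in terms of theta lift}, and finally Theorem~\ref{Theorem: theta lift at CM points} together with the Legendre duplication formula to collect constants. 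The only caveat is the bookkeeping you yourself flag: the correct weight is $(4Mm)^{k-\frac12}$ rather than $(Mm)^{k-\frac12}$, but since you work up to explicit constants to be tracked, this is absorbed in your stated plan.
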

 \begin{remark}
 		For $M = 1$ and the lattice $L$ from the introduction, with $\Gamma_L \cong \SL_2(\Z)$, $L_{\mu,-m} = \mathcal{Q}_{4m}$, and $d = -4Mq(\mathcal{A})$, we recover Theorem~\ref{Theorem: main, intro}. We remark that by combining the results of this paper and the methods from \cite[Section~7]{bruinier2020greens} one can also derive similar formulas for twisted traces of cycle integrals of $f_{k,\mathcal{A}}$.
 \end{remark}
 \begin{proof}[Proof of Theorem~\ref{Theorem: main result}]
		First, by \cite[Corollary~4.3]{lobrich2019meromorphic} we have that
		\begin{align*}
		\tr_{f_{k, \mathcal{A}}}(\mu,m) = \frac{2^k |4Mq(\mathcal{A})|^{\frac{1}{2}} (4Mm)^{k-\frac{1}{2}}}{(k-1)! \left|\big(\overline{\Gamma}_L\big)_{z_{\mathcal{A}}}\right|} R_{2-2k}^{k-1} (\mathcal{F}_{1-k,\mu,m}) (z_\mathcal{A}).
	\end{align*}
		Note that we are using a different normalisation of $f_{k,\mathcal{A}}$, and that the results of \cite{lobrich2019meromorphic} are formulated in a more classical language. However, the exact same arguments as in the proof of \cite[Corollary~4.3]{lobrich2019meromorphic} work in the general case that we need.

		Next, recall that we can write $f$ as a linear combination of Maass Poincar\'e series as in \eqref{Equation: basis}. Hence, we obtain from Theorem~\ref{Theorem: raising of F_1-k,D in terms of theta lift} the formula
		\begin{align*}
		\sum_{\mu \in L'/L}\sum_{m > 0}c_f(\mu,m)(4Mm)^{k-\frac{1}{2}}R_{2-2k}^{k-1} (\mathcal{F}_{1-k,\mu,m}) (z_\mathcal{A}) = \frac{(k-1)!^2 M^{\frac{k-1}{2}} }{2^{2k-1}\pi^{\frac{k}{2}}\Gamma\left( \frac{k}{2} \right)^2}  \Lambda^{\reg} \left(f, z_\mathcal{A}\right).
		\end{align*}
		
		Finally, by Theorem~\ref{Theorem: theta lift at CM points} we have the evaluation
		\begin{align*}
		\Lambda^{\reg} \left(f, z_\mathcal{A}\right)
	= \frac{ \pi^{\frac{1}{2}}\Gamma\left(\frac{k}{2}\right) }{2(4\pi)^{1-\frac{k}{2}}  \Gamma\left( \frac{k+1}{2} \right)}  \CT\left(\left\langle f_{P \oplus N}(\tau) ,  \left[ \mathcal{G}_{P}^+(\tau) , \Theta_{N^{-}}(\tau)\right]_{\frac{k}{2}-1}  \right\rangle \right).
		\end{align*}
		If we put all the constants together and use the Legendre duplication formula $\pi^{\frac{1}{2}}\Gamma(k) = 2^{k-1}\Gamma(\frac{k}{2})\Gamma(\frac{k+1}{2})$, we obtain the stated formula.
\end{proof}

\end{document}